\documentclass[11pt]{amsart}

\usepackage[left=1in,top=1in,right=1in]{geometry}

\usepackage{
color,
amsmath,
amsfonts,
latexsym, 
amsrefs,
amssymb,
amsthm,
hyperref,
ulem,
bbm,
enumerate,
graphicx
}

\newcommand{\ii}{\mathrm{i}} 
\newcommand{\N}{\mathbb N}
\newcommand{\R}{\mathbb R}
\newcommand{\C}{\mathbb C}
\newcommand{\1}{\mathbbm{1}}
\newcommand{\rd}{{\rm d}}
\renewcommand{\P}{\mathbb{P}}

\newcommand{\E}{\mathbb{E}}
\renewcommand{\Re}{{\rm Re}}
\newcommand{\e}{\varepsilon}

\DeclareMathOperator{\OO}{O}
\DeclareMathOperator{\oo}{o}

\newtheorem{thm}{Theorem}
\newtheorem{lem}[thm]{Lemma}
\newtheorem{prop}[thm]{Proposition}

\newtheorem{rem}[thm]{Remark}
\newtheorem{conj}[thm]{Conjecture}

\title[Is the Riemann zeta function in a short interval a 1-RSB spin glass ?]{Is the Riemann zeta function in a short interval \\ a 1-RSB spin glass ?}

\author[L.-P. Arguin]{Louis-Pierre Arguin}
\address{Department of Mathematics, Baruch College and Graduate Center, City University of New York, New York, NY 10010.}
\email{louis-pierre.arguin@baruch.cuny.edu}

\author[W. Tai]{Warren Tai}
\address{Department of Mathematics, Graduate Center, City University of New York, New York, NY 10010.}
\email{wtai@gradcenter.cuny.edu}

\keywords{Riemann zeta function, Disordered systems, Spin glasses} \subjclass[2000]{}

\date{September 13, 2018}

\begin{document}

\maketitle

\begin{abstract}
Fyodorov, Hiary \& Keating established an intriguing connection between the maxima of log-correlated processes and the ones of the Riemann zeta function on a short interval of the critical line.
In particular, they suggest that the analogue of the free energy of the Riemann zeta function is identical to the one of the Random Energy Model in spin glasses.
In this paper, the connection between spin glasses and the Riemann zeta function is explored further. We study a random model of the Riemann zeta function and show that its two-overlap distribution corresponds to the one of a one-step replica symmetry breaking (1-RSB) spin glass. 
This provides evidence that the local maxima of the zeta function are strongly clustered. 
\end{abstract}

\section{Introduction and Main Result}
\subsection{Background}
The Riemann zeta function is defined on $\C$ by
\begin{equation}
\label{eqn: zeta}
\zeta(s)= \sum_{n\geq 1}\frac{1}{n^s}=\prod_{p\text{ primes}}\left(1-p^{-s}\right)^{-1} \text{ if $\Re \ s>1$,} 
\end{equation}
and can be analytically continued to the whole complex plane by the functional equation
$$
\zeta(s)=\chi(s) \zeta(1-s)\ , \qquad \chi(s)=2^s \pi^{s-1}\sin \left(\frac{\pi}{2}s\right)\Gamma(1-s)\ .
$$
Trivial zeros are located at negative even integers where $\chi(s)=0$. The non-trivial zeros are restricted to the critical strip $0\leq \Re \ s\leq 1$. 
The Riemann hypothesis states that they all lie on the critical line $\Re\ s=1/2$. A weaker statement, yet with deep implications on the distribution of the primes, is the Lindel\"of hypothesis which stipulates that the maximum of $\zeta$ on a large interval $[0,T]$ of the critical line grows slower than any power of $T$, i.e.~$\zeta(1/2+\ii T)$ is $\OO(T^\e)$ for any $\e>0$, see e.g.~\cite{titchmarsh}.
 
Mathematical physics has provided several important insights in the study of the Riemann zeta function over the years.
We refer the reader to \cite{schumayer-hutchinson} for a broad discussion on this topic. 
We briefly highlight three contributions from statistical mechanics and probability. 
First, there are deep connections between the statistics of eigenvalues of random matrices and the zeros of zeta as exemplified by the Montgomery's pair correlation conjecture, see for example \cite{bourgade-keating}. Second, the Riemann hypothesis can be recast in the framework of Ising models of statistical mechanics where it bears a resemblance to the Lee-Yang theorem. This perspective was investigated in details by Newman \cite{newman1,newman2,newman3}. It led to an equivalent reformulation of the Riemann hypothesis in terms of the exact value of the de Bruijn--Newman constant \cite{newman}, see \cite{saouter_etal} for a numerical estimate of the constant and \cite{tao} for a proof that the constant is non-negative.
Third, Fyodorov, Hiary \& Keating \cite{fyodorov-hiary-keating} and Fyodorov \& Keating \cite{fyodorov-keating} recently unraveled a striking connection between the local statistics of the large values of the Riemann zeta function on the critical line and the extremes of a class of disordered systems, the {\it log-correlated processes}, that includes among others branching Brownian motion and the two-dimensional Gaussian free field. This connection has also been extended recently to the theory of Gaussian multiplicative chaos by Saksman \& Webb \cite{saksman-webb_1,saksman-webb_2}. 

The Fyodorov-Hiary-Keating conjecture is as follows \cite{fyodorov-hiary-keating, fyodorov-keating}: if $\tau$ is sampled uniformly on a large interval $[T,2T]$, then the maximum on a short interval, say $[0,1]$, around $\tau$ is
\begin{equation}
\label{eqn: FHK}
\max_{h\in[0,1]}\log \left| \zeta(1/2+\ii (\tau+h)\right|=\log\log T -\frac{3}{4}\log\log\log T+\mathcal M_{T} ,
\end{equation}
where $(\mathcal M_T)$ is a sequence of random variables converging in distribution. 
The deterministic order of the maximum corresponds exactly to the one of a log-correlated process, such as a branching random walk and the two-dimensional Gaussian free field, see for example \cite{kistler, arguin} for more background on this class of processes. The precise value of the leading order can be predicted heuristically since the process for $\log\zeta$ has effectively $\log T$ distinct values on $[0,1]$ (because there are on average $\log T$ zeros on $[0,1]$, see for example \cite{titchmarsh}), and the marginal distribution of $\log |\zeta(1/2+i(\tau+h))|$ should be close to Gaussian with variance $\frac{1}{2}\log\log T$ as predicted by Selberg's Central Limit Theorem \cite{radziwill-soundararajan}. The log-correlations already appear at the level of the typical values from the multivariate CLT proved in \cite{bourgade}. The first order of the conjecture \eqref{eqn: FHK} was proved recently in parallel: conditionally on the Riemann hypothesis in \cite{najnudel}, and unconditionally in \cite{ABBRS}. 
The evidence in favor of the conjecture laid out by Fyodorov \& Keating \cite{fyodorov-keating} suggests that the large values of the Riemann zeta function locally behaves like a disordered system of the spin-glass type characterized by an energy landscape with multiple minima, see Figure \ref{fig: landscape}.
\begin{figure}
\label{fig: landscape}
\includegraphics[height=5cm]{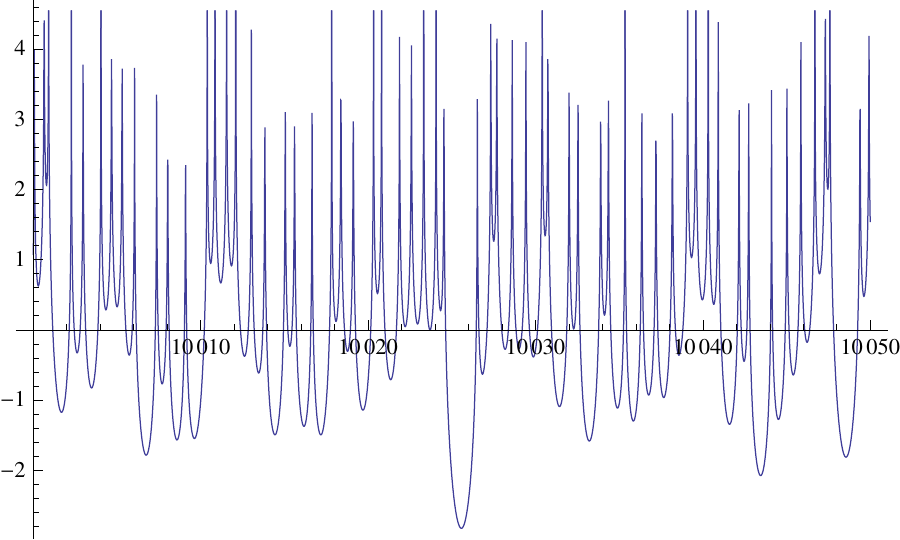}
\caption{The value of $-\log|\zeta(1/2+i(T+h))|$ for $T=10000$ and $h\in[0,50]$.}
\end{figure}
In particular, by considering $-\log |\zeta(1/2+\ii(\tau+h))|$ as the energy of a disordered system on the state space $[0,1]$,
they predict that the analogue of the free energy is in the limit
\begin{equation}
\label{eqn: free zeta}
\begin{aligned}
\lim_{T\to\infty} \frac{1}{\log \log T} \log \left(\log T \cdot \int_0^{1} |\zeta(\frac{1}{2}+\rm i(\tau+h))|^{\beta} {\rm d}h\right)=
\begin{cases}
1+\frac{\beta^2}{4} & \ \text{if $\beta<2$,}\\
\beta  & \ \text{if $\beta\geq 2$,}
\end{cases}
\end{aligned}
\end{equation}
similarly to a Random Energy Model (REM) with $\log T$ independent Gaussian variables of variance $\frac{1}{2}\log\log T$. 

In this paper, we explore the connection with spin glasses further by providing evidence that $\log|\zeta|$ behaves locally like a spin glass with one-step replica symmetry breaking (1-RSB), cf.~Theorem \ref{thm: main}. 
More precisely, we study a simple random model introduced by Harper \cite{harper} for the large values of $\log |\zeta|$. We show that two points sampled from the Gibbs measure at low temperature have correlation coefficients (or {\it overlap}) $0$ or $1$ in the limit, similarly to a 1-RSB spin glass.
We expect that part of our approach could be extended to prove a similar result for the Riemann zeta function itself as stated in Conjecture \ref{conj: zeta} below.

\subsection{The model and main result}
Let $(U_p, p \text{ primes})$ be IID uniform random variables on the unit circle in $\C$. We write $\E$ for the expectation over the $U_p$'s. 
We study the stochastic process
\begin{equation}
\label{eqn: X}
X_h=\sum_{ p\leq T} \frac{\Re(U_p p^{-ih})}{p^{1/2}}, \qquad h\in [0,1]\ .
\end{equation}
We drop the dependence on $T$ in the notation for simplicity. 
The process $(X_h, h\in [0,1])$ is a good model for the large values of $\log |\zeta(1/2+\ii(\tau+h))|$, $h\in [0,1]$, see \cite{arguin-belius-harper,harper, sound} for more details.
For example, it is known that the deterministic order of $\max_{h\in[0,1]}X_h$ corresponds to the one in \eqref{eqn: FHK}, as proved in \cite{arguin-belius-harper}.
Roughly speaking, the process $X_h$ corresponds to the leading order of the logarithm of the Euler product \eqref{eqn: zeta} with the identification 
$$
\left(p^{-\ii \tau}, p \text{ primes}\right) \longleftrightarrow \left(U_p, p \text{ primes}\right)\ .
$$
It is easily checked by computing the joint moments that the above identification is exact as $T\to\infty$ in the sense of finite-dimensional distribution.

The covariance can be calculated using the explicit distribution of the $U_p$'s: 
\begin{equation}
\label{eqn: covariance1}
\begin{aligned}
\E[X_hX_{h'}]
&=\sum_{p\leq T}\int_0^{2\pi} \frac{1}{2}\left(e^{\ii (\theta-h\log p)}+e^{-\ii (\theta-h\log p)}\right)\cdot  \frac{1}{2}\left(e^{\ii (\theta-h'\log p)}+e^{-\ii (\theta-h'\log p)}\right)\  \frac{\rd\theta}{2\pi}\\
&= \frac{1}{2}\sum_{p\leq T} \frac{\cos(|h-h'|\log p)}{p}\ .
\end{aligned}
\end{equation}
We are interested in the correlation coefficient or {\it overlap} (in the spin glass terminology):
\begin{equation}
\label{eqn: rho}
\rho(h,h')
=\frac{\E[X_hX_{h'}]}{\sqrt{\E[X_h^2]\ \E[X_{h'}^2]}}\ , \qquad \text{ for a given pair $(h,h')$.}
\end{equation}
Any sum over primes can be estimated using the Prime Number Theorem \cite{montgomery-vaughan}, which gives the density of the primes up to very good errors,
\begin{equation}
\label{eqn: PNT}
\#\{p\leq x: p \text{ prime}\}=\int_2^x \frac{1}{\log y} \rd y+ \OO(xe^{-c\sqrt{\log x}})\ .
\end{equation}
(The error term, which is already more than sufficient for our purpose, is improved under the Riemann hypothesis.)
In particular, this can be used to rewrite the covariances as (see Lemma \ref{lem: sum primes} below for details), 
\begin{equation}
\label{eqn: covariance2}
\E[X_h^2]=\frac{1}{2}\sum_{p\leq T}p^{-1}=\frac{1}{2}\log\log T +\OO(1) \qquad \E[X_hX_{h'}]= \frac{1}{2}\log |h-h'|^{-1}+\OO(1)\ .
\end{equation}
The process $(X_h)$ is said to be {\it log-correlated}, since the covariance decays approximately like the logarithm of the distance. 
The correlation coefficients as a function of the distance become
\begin{equation}
\label{eqn: log cor}
\rho(h,h')=\frac{\log |h-h'|^{-1}}{\log\log T} +\oo(1)\ , \text{ for $|h-h'|\geq (\log T)^{-1}$.}
\end{equation}
Throughout the paper, we will use the notation $f(T)=\oo(g(T))$ if $f(T)/g(T)\to 0$ and $f(T)=\OO(g(T))$ if $f(T)/g(T)$ is bounded. We will sometimes use $f(T)\ll g(T)$ for short  if $f(T)=\OO(g(T))$ (the Vinogradov notation).

The main result of this paper is the limiting distribution of the correlation coefficient when $h$ and $h'$ are sampled from the Gibbs measure. This is referred to as the {\it two-overlap distribution} in the spin-glass terminology. We denote the the Gibbs measure by
\begin{equation}
G_{\beta,T}(A)=\int_A \frac{e^{\beta X_h}}{Z_{\beta, T}} \ \rd h \qquad Z_{\beta,T}=\int_0^1 e^{\beta X_h}\ \rd h\ .
\end{equation}

\begin{thm}
\label{thm: main}
For every $\beta>2$ and for any interval $I\subseteq [0,1]$, 
$$
\lim_{T\to \infty} \E\left[G_{\beta, T}^{\times 2}\{(h,h'): \rho(h,h')\in I\}\right]
= \frac{2}{\beta}\1_I(0) + (1-\frac{2}{\beta})\1_I(1)\ .
$$
where $\1_I$ is the indicator function of the set $I$.
In other words, when $h,h'$ are sampled independently from the Gibbs measure $G_{\beta,T}$, the random variable $\rho(h,h')$  is Bernoulli-distributed with parameter $2/\beta$ in the limit $T\to\infty$.
\end{thm}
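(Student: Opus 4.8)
\emph{Step 1 (overview and reduction).} The plan is to translate the overlap into a condition on the distance $|h-h'|$, reduce the theorem to a clustering property of the low-temperature Gibbs measure, and establish that property by a multiscale analysis of the large values of $(X_h)$ in the spirit of \cite{arguin-belius-harper}. First I would record, besides \eqref{eqn: covariance2} and \eqref{eqn: log cor}, the analogous prime-sum estimates for the truncated fields $X_h(\alpha)=\sum_{p\le \exp((\log T)^{\alpha})}\Re(U_p p^{-\ii h})\,p^{-1/2}$: one has $\E[X_h(\alpha)^2]=\tfrac{\alpha}{2}\log\log T+\OO(1)$, the increments $X_h-X_h(\alpha)$ decorrelate once $|h-h'|\gtrsim(\log T)^{-\alpha}$, and $X_h(\alpha)$ is $\OO(1)$-constant on intervals of length $(\log T)^{-\alpha}$. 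By \eqref{eqn: log cor}, $\{\,\rho(h,h')>s\,\}$ agrees, up to pairs with $|h-h'|<(\log T)^{-1}$ and up to boundary contributions (controlled by averaging over a random offset of the partition), with the event that $h,h'$ lie in a common interval $I_j^{(s)}$ of a partition of $[0,1]$ into $\lceil(\log T)^{s}\rceil$ intervals of length $(\log T)^{-s}$. Writing $W_j^{(s)}:=G_{\beta,T}(I_j^{(s)})$, this gives $\E[G_{\beta,T}^{\times 2}\{\rho>s\}]=\E[\sum_j (W_j^{(s)})^2]+\oo(1)$, so the theorem reduces to showing that
$$
\lim_{T\to\infty}\E\Bigl[\,\sum_j (W_j^{(s)})^2\,\Bigr]=1-\frac{2}{\beta}\qquad\text{for every fixed } s\in(0,1),
$$
since $\E[G_{\beta,T}^{\times 2}\{\rho\in I\}]$ is then an appropriate difference of such quantities and converges to $\tfrac2\beta\1_I(0)+(1-\tfrac2\beta)\1_I(1)$. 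The essential feature is that the limit must not depend on $s$: this $s$-independence is exactly the statement that the Gibbs measure gives vanishing mass to intermediate overlaps, i.e.\ the 1-RSB picture.

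\emph{Step 2 (a priori partition function bounds).} I would next show $Z_{\beta,T}=(\log T)^{\beta-1+\oo(1)}$ with probability tending to $1$. The lower bound follows from $\max_h X_h=\log\log T-\tfrac34\log\log\log T+\OO(1)$ of \cite{arguin-belius-harper} together with a short argument that $X_h$ stays within $\OO(1)$ of its maximum over a window of length $\gtrsim(\log T)^{-1}$ about the maximizer; the upper bound follows from Markov's inequality applied to a barrier-truncated partition function (the plain first moment $\E[Z_{\beta,T}]\asymp(\log T)^{\beta^2/4}$ being far too large when $\beta>2$). Rescaling a single $I_j^{(s)}$ to unit length, the same analysis shows that the $\zeta_j:=\int_{I_j^{(s)}}e^{\beta(X_h-X_h(s))}\,\rd h$ are, conditionally on $(X_h(s))$, independent copies of the same low-temperature model at depth $(1-s)\log\log T$.

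\emph{Step 3 (extremal structure --- the hard part).} By definition $W_j^{(s)}=e^{\beta M_j}/\sum_k e^{\beta M_k}$ with $M_j:=\beta^{-1}\log\int_{I_j^{(s)}}e^{\beta X_h}\,\rd h$, and the decomposition $X_h=X_h(s)+(X_h-X_h(s))$ shows $M_j=X_{h_j}(s)+\beta^{-1}\log\zeta_j+\OO(1)$, so $\sum_j (W_j^{(s)})^2=(\sum_j e^{2\beta M_j})/(\sum_k e^{\beta M_k})^2$ is the quantity $\sum_i p_i^2$ for a random energy model with energies $(M_j)$. The key input I would need is the structure of the large values of $(X_h)$: they occur in microscopic clusters at scale $(\log T)^{-1}$; each $I_j^{(s)}$ carrying non-negligible Gibbs mass contains exactly one such cluster (this is where $s$-independence enters, since $(\log T)^{-1}\ll(\log T)^{-s}$, so clusters are neither split by the partition nor shared between two intervals); and the recentered cluster maxima, with centering $m_T=\log\log T-\tfrac34\log\log\log T$, converge to a Poisson point process of intensity proportional to $e^{-2x}\,\rd x$. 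The exponent $2$ here is the critical inverse temperature $\beta_c$ of \eqref{eqn: free zeta}, and comes from the Gaussian right tail of $X_h$ (variance $\tfrac12\log\log T$, in the spirit of Selberg's theorem) weighed against the $\log T$ effectively independent values on $[0,1]$. Proving this --- and in particular excluding clustering of large values at every intermediate scale $(\log T)^{-s}$, $s\in(0,1)$ --- is the main obstacle: it requires a genuine multiscale refinement of the second-moment method with barriers, adapted from \cite{arguin-belius-harper}, rather than a single-scale estimate, since a naive bound of $Z_{\beta,T}^{-2}$ against the unrestricted first moment of $\int\!\!\int_{\rho\in(\e,1-\e)}e^{\beta(X_h+X_{h'})}$ overshoots by a positive power of $\log T$, precisely because $Z_{\beta,T}$ is far smaller than $\E[Z_{\beta,T}]$ when $\beta>2$.

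\emph{Step 4 (conclusion).} Given Step 3 and the conditional independence of the $\zeta_j$ from Step 2, the Gibbs measure concentrates on the cluster regions, its cluster weights $(q_i)$ converge to a Poisson--Dirichlet$(2/\beta)$ family --- meaningful because $\beta>\beta_c=2$ --- and $\sum_j (W_j^{(s)})^2=\sum_i q_i^2+\oo(1)$. A uniform integrability bound from Step 2 then lets me pass the expectation to the limit, and $\E[\sum_i q_i^2]=1-\tfrac2\beta$ by the classical Poisson--Dirichlet computation for Derrida's REM. Substituting into the reduction of Step 1 yields Theorem~\ref{thm: main}.
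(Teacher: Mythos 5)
Your strategy is genuinely different from the paper's, and it contains a genuine gap at its core. The paper never touches the extremal process: it proves the theorem via the Bovier--Kurkova/Arguin--Zindy scheme, i.e.\ an approximate Gaussian integration by parts (Proposition \ref{prop: carmona-hu}, adapted from Carmona--Hu and Auffinger--Chen to the non-Gaussian variables $U_p$) identifying $\int_0^\alpha \E[G_{\beta,T}^{\times 2}\{\rho\le y\}]\,\rd y$ with $\frac{2}{\beta^2\log\log T}\partial_u F_T(\beta;\alpha,0)$, followed by a computation of the perturbed free energy $F_T(\beta;\alpha,u)$ by Kistler's multiscale second moment method (Proposition \ref{prop: free energy}); convexity in $u$ and the Lebesgue differentiation theorem then finish the argument. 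The only probabilistic inputs are first and second moment estimates on the measure of $\gamma$-high points of the perturbed field --- no tightness of the maximum, no Poisson statistics, no Poisson--Dirichlet.

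Your Step 3 is precisely what this scheme is designed to avoid. You assume (i) that the large values of $X_h$ cluster at scale $(\log T)^{-1}$ with no intermediate-scale clustering, (ii) that the recentered cluster maxima converge to a Poisson point process with intensity proportional to $e^{-2x}\,\rd x$, and (iii) that the Gibbs cluster weights converge to a Poisson--Dirichlet$(2/\beta)$ family. Items (ii) and (iii) are strictly stronger than Theorem \ref{thm: main} itself (the theorem is an immediate corollary of Poisson--Dirichlet statistics via $\E[\sum_i q_i^2]=1-2/\beta$), and they were not available for this model at the time --- the paper explicitly lists Poisson--Dirichlet statistics of the Gibbs weights as an expected, not proven, consequence, deferred to later work. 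You flag this step as ``the main obstacle'' and sketch why a naive first-moment bound fails, but you do not supply the argument; as written the proposal assumes the hard part. Your Steps 1, 2 and 4 are sound reductions (the identification of overlaps with dyadic-type intervals via \eqref{eqn: log cor}, the bound $Z_{\beta,T}=(\log T)^{\beta-1+\oo(1)}$ consistent with Proposition \ref{prop: free energy} at $u=0$, and the Poisson--Dirichlet computation), but they all funnel into the unproven Step 3. If you want a complete proof along the lines actually available, the perturbed-free-energy route is the one to take: it trades the full extremal structure for a derivative of a Laplace-type functional, which is accessible by second-moment arguments alone.
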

The limit is exactly the two-overlap distribution of a 1-RSB spin glass. In view of the relation  \eqref{eqn: log cor} between the correlation coefficient and the distance $|h-h'|$, 
the result means that the large values of $X_h$ must lie at a distance $\OO(1)$ or $\OO((\log T)^{-1})$. The mesoscopic distances $(\log T)^{-\alpha}$, $0<\alpha<1$ are effectively ruled out. Similar results were obtained for the REM model \cite{derrida}, and log-correlated processes \cite{derrida-spohn,bovier-kurkova1, bovier-kurkova2,arguin-zindy1, ABK,arguin-zindy2,jagannath, ouimet2}. 

In the spirit of the Fyodorov-Hiary-Keating conjecture, Theorem \ref{thm: main} suggests that $\log |\zeta|$ exhibits 1-RSB for $\beta$ large enough.
\begin{conj}
\label{conj: zeta}
Consider 
$$
\mathcal G_{\beta}(t)=|\zeta(1/2+\ii t)|^\beta \qquad \mathcal Z_{\beta}(t)=\int_0^1 \mathcal G_{\beta}(t+h)\rd h\ .
$$
For $\beta>2$, and any interval $I\subseteq [0,1]$, if $\tau$ is sampled uniformly on $[T,2T]$:
$$
\lim_{T\to \infty} \E
\left[\int_{\{(h,h'):\rho(h,h')\in I\}} \frac{\mathcal G_{\beta}(\tau+h)\cdot \mathcal G_{\beta}(\tau+h')}{ Z_{\beta}(\tau)^2} \ \rd h\rd h'\right]
=  \frac{2}{\beta}\1_I(0) + (1-\frac{2}{\beta})\1_I(1)\ \ .
$$
In other words, points $h,h'$ whose $\zeta$-value is of the order of $\log\log T$ are at a distance of $\OO(1)$ or $\OO((\log T)^{-1})$. 
\end{conj}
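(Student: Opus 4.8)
We outline an approach to Conjecture~\ref{conj: zeta}. The plan is to transfer Theorem~\ref{thm: main} to $\zeta$ itself along the lines of the proof of the leading order of the Fyodorov--Hiary--Keating conjecture in \cite{najnudel, ABBRS}. Two ingredients do the transfer. First, a Dirichlet-polynomial approximation: for $\tau$ uniform on $[T,2T]$ and all $h\in[0,1]$, outside a set of $\tau$ of vanishing relative measure,
$$
\log\bigl|\zeta(1/2+\ii(\tau+h))\bigr| = \Re\!\!\sum_{p\le T^{1/\log\log T}}\!\!\frac{p^{-\ii(\tau+h)}}{p^{1/2}} + \bigl(\text{error }\oo(\log\log T)\bigr),
$$
the error being controlled uniformly in $h$ by the zero-counting and mollification estimates of \cite{ABBRS}; the omitted primes in $(T^{1/\log\log T},T]$ carry only $\OO(\log\log\log T)$ of the variance and are negligible. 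Second, randomization of the phases: for fixed $\beta$ the moments over $\tau$ that enter a second-moment analysis of the truncated polynomial involve products of boundedly many primes $\le T^{1/\log\log T}$, hence have conductor $T^{\oo(1)}$, and so match the corresponding expectations over the $U_p$ up to $\oo(1)$ --- the quantitative form of the remark after \eqref{eqn: X}. One then decomposes the truncated polynomial into $\asymp\log\log T$ increments, the $j$-th collecting the primes with $\lceil\log\log p\rceil=j$, each of variance $\tfrac12+\oo(1)$ and with the covariance structure of the corresponding increment of $X_h$.

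The core computation is then a truncated second-moment estimate for the overlap functional, performed as for Gaussian log-correlated fields in \cite{bovier-kurkova1, bovier-kurkova2, arguin-zindy1, arguin-zindy2}. One fixes a good event $\mathcal A_T\subseteq[T,2T]$ of asymptotic relative measure one on which $\mathcal Z_\beta(\tau)\ge(\log T)^{\beta-1-\oo(1)}$, the typical size of the partition function in the glassy phase $\beta>2$; on $\mathcal A_T$ one bounds $\mathcal Z_\beta(\tau)^2$ below by a deterministic quantity and is left with estimating $\E_\tau\bigl[\int_{\{\rho\in I\}}\mathcal G_\beta(\tau+h)\,\mathcal G_\beta(\tau+h')\,\rd h\,\rd h'\bigr]$, a short-interval $\beta$-th moment of $\zeta$ with the overlap constraint inserted. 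Via the approximation and randomization above this reduces to the corresponding quantity for the model, which is exactly what the proof of Theorem~\ref{thm: main} evaluates, the sharp upper and lower bounds of Harper and of Radziwi\l\l--Soundararajan \cite{harper, radziwill-soundararajan} for short-interval moments (and, for even integer $\beta$, the classical moment asymptotics) serving to control the error terms and to construct $\mathcal A_T$. The three ranges of $\rho$ are then separated as in the Gaussian case: splitting the Dirichlet polynomial at a scale $q$ into a low part, common to $h$ and $h'$ whenever $\rho(h,h')\approx q$, and an independent high part, conditioning on the low part factorizes the numerator into a squared expectation of a partition function over $\approx(\log T)^{1-q}$ effective points; the mesoscopic range $q\in(0,1)$ is killed by a ballot estimate --- a near-maximal trajectory of $\log|\zeta|$ stays, at every scale, below the linear interpolation between $0$ and its terminal value, the analogue for $\zeta$ of the barrier estimate of \cite{arguin-belius-harper} --- while the endpoint masses $2/\beta$ at $\rho\approx 0$ and $1-2/\beta$ at $\rho\approx 1$ come out of the first- and second-moment bookkeeping precisely as in Theorem~\ref{thm: main}.

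The hard part is supplying, for $\zeta$ itself, the two inputs that Theorem~\ref{thm: main} gets for free from the independence of the $U_p$: the lower bound $\mathcal Z_\beta(\tau)\ge(\log T)^{\beta-1-\oo(1)}$ for almost every $\tau\in[T,2T]$, which needs not merely the leading order of $\max_h\log|\zeta(1/2+\ii(\tau+h))|$ from \cite{najnudel, ABBRS} but also the ``thickness'' of the set of near-maximal $h$; and the uniform ballot control of the whole process $h\mapsto\Re\sum_{p\le T^{1/(\log\log T)^k}}p^{-1/2-\ii(\tau+h)}$ jointly over all scales $k$ and all $h\in[0,1]$. Both amount to developing the extremal theory of $\log|\zeta|$ on a short interval to the level now known for branching random walk, which in turn requires pushing the Dirichlet-polynomial machinery of \cite{ABBRS} --- presently unconditional only for the first order of the maximum --- to control the joint law of all the scale increments. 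Assuming the Riemann hypothesis, where sharp conditional moment bounds for $\zeta$ are available, this program is substantially more tractable, and a proof of Conjecture~\ref{conj: zeta} conditional on RH appears within reach of current methods; the unconditional statement should follow once the short-interval extremal theory is in place.
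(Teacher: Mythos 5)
The statement you are trying to prove is stated in the paper as Conjecture \ref{conj: zeta}: the authors prove the two-overlap result only for the random model $(X_h)$ (Theorem \ref{thm: main}) and explicitly leave the statement for $\zeta$ itself open, so there is no proof in the paper to compare against. Your submission is accordingly a research program rather than a proof, and you say so yourself: the two inputs you isolate at the end (the lower bound $\mathcal Z_\beta(\tau)\ge(\log T)^{\beta-1-\oo(1)}$ for almost every $\tau$, and the uniform multi-scale barrier/ballot control of the truncated Dirichlet polynomials jointly in $h$ and in the scale) are not supplied, and they are precisely the substance of the conjecture, not technical remainders. Note in particular that the lower bound on $\mathcal Z_\beta(\tau)$ is essentially the $\beta>2$ half of the Fyodorov--Hiary--Keating free-energy prediction \eqref{eqn: free zeta}, which is itself unproven; the first-order results of \cite{najnudel, ABBRS} give the order of $\max_h\log|\zeta|$ but not the ``thickness'' of near-maximal level sets that your Laplace-method bookkeeping needs.

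Beyond the gaps you acknowledge, two of your claimed reductions would not go through as stated. First, the randomization step: the equivalence between $(p^{-\ii\tau})$ and $(U_p)$ holds in the sense of finite-dimensional distributions or fixed (integer) moments of the Dirichlet polynomial, but the Gibbs measure at $\beta>2$ probes a large-deviation regime (values of size $\log\log T$, probabilities polynomially small in $\log T$) and non-integer $\beta$-th powers, where no such transfer is known unconditionally; the sharp short-interval moment upper bounds you invoke are conditional on RH, which is why even you concede only a conditional statement ``appears within reach.'' Second, the mechanism of the paper's own proof for the model is not the Bovier--Kurkova/second-moment computation alone: Proposition \ref{prop: overlap free} rests on an approximate Gaussian integration by parts (Lemma \ref{lem: by parts}, Proposition \ref{prop: carmona-hu}) that uses the independence of the $U_p$'s in an essential way, via differentiation in a perturbation parameter attached to a single prime. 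For $\zeta$ the only randomness is the single uniform shift $\tau$, and no analogue of this integration-by-parts step is available; your sketch silently replaces it by a direct overlap-constrained moment computation, which is exactly the part for which no tools currently exist. So the proposal is a plausible roadmap consistent with how the authors frame the problem, but it does not constitute a proof, conditional or otherwise.
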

The above conjecture implies a strong clustering of the high values of $\zeta$ at a scale $(\log T)^{-1}$ akin to the one observed in log-correlated process \cite{abk_genealogy}.
In turns, this phenomenon has important consequences for the joint statistics of high values which should be Poissonian at a suitable scale as for log-correlated processes \cite{abk_poisson, biskup-louidor}.
In particular, it is expected that the statistics of the Gibbs weights is Poisson-Dirichlet \cite{arguin-zindy1,arguin-zindy2}, and that the Gibbs measure converges to an atomic measure on $[0,1]$, see \cite{vargas}.
This perspective is studied in \cite{ouimet}, and will be discussed further in a forthcoming paper.  

\noindent{\bf Acknowledgements}. L.-P. A. is supported by NSF CAREER 1653602, NSF grant DMS-1513441, and a Eugene M. Lang Junior Faculty Research Fellowship.
W. T. is partially supported by  NSF grant DMS-1513441. Both authors would like to thank Fr\'ed\'eric Ouimet for useful comments on a first version of the paper. L.-P. A. is indebted to Chuck Newman for his constant support and his scientific insights throughout the years. 

\subsection{Main Propositions and Proof of the Theorem \ref{thm: main}}

The proof of Theorem \ref{thm: main} is based on a method developed for log-correlated Gaussian processes by Arguin \& Zindy \cite{arguin-zindy1, arguin-zindy2}.
It was adapted from a method of Bovier \& Kurkova  \cite{bovier-kurkova1, bovier-kurkova2} for Generalized Random Energy Models (GREM's). 
The main idea is to relate the distribution of the overlaps with the free energy of a perturbed process. 
In the present case, the process is not Gaussian and the method has to be modified.
To this aim, consider the process at {\it scale} $\alpha$, for $0<\alpha<1$, where the sum over primes is truncated at $\exp((\log T)^\alpha)$,
\begin{equation}
X_h(\alpha)=\sum_{\log p\leq (\log T)^\alpha} \frac{\Re(U_p \ p^{-ih})}{p^{1/2}}, \qquad h\in [0,1]\ .
\end{equation}
Note that $X_h(1)=X_h$.
For a small parameter $|u|<1$, we consider the {\it free energy} of the perturbed process $X_h+uX_h(\alpha)$ at scale $\alpha$:
\begin{equation}
\label{eqn: f_T}
\begin{aligned}
F_T(\beta;\alpha, u)
&=
\E\left[\log \int_{0}^1 \exp\big(\beta (X_h+uX_{h}(\alpha)\big) \rd h\right]\ .
\end{aligned}
\end{equation}
The connection between the free energy \eqref{eqn: f_T} and the distribution of the correlation coefficients is through Gaussian integration by parts.
Of course, for the process $X_h$, this step is only approximate. It follows closely the work of Carmona \& Hu \cite{carmona-hu} and Auffinger \& Chen \cite{auffinger-chen} on the universality of the free energy and overlap distributions in the Sherrington-Kirkpatrick model. 
\begin{prop}
\label{prop: overlap free}
For any $0< \alpha< 1$,
$$
\left|\int_0^\alpha \ \E \left[G_{\beta, T}^{\times 2}\big\{(h,h'): \rho(h,h')\leq y \big\}\right]\  \rd y
-\frac{2}{\beta^2\log\log T} \frac{\partial  F_T}{\partial u}(\beta;\alpha, 0) \right|=\oo(1)\ .
$$
\end{prop}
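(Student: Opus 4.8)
The plan is to compute $\frac{\partial F_T}{\partial u}(\beta;\alpha,0)$ explicitly by differentiating under the expectation and the integral sign, and to recognize the resulting expression as (approximately) a Gibbs average of a function of the covariance structure, which in turn can be re-expressed in terms of the overlap distribution. First I would write
$$
\frac{\partial F_T}{\partial u}(\beta;\alpha,u)
= \beta\, \E\left[\int_0^1 X_h(\alpha)\, \frac{e^{\beta(X_h+uX_h(\alpha))}}{\int_0^1 e^{\beta(X_{h'}+uX_{h'}(\alpha))}\,\rd h'}\,\rd h\right],
$$
and then evaluate at $u=0$, giving $\beta\,\E\big[\int_0^1 X_h(\alpha)\,\rd G_{\beta,T}(h)\big]$. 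The next step is the key one: since $X_h(\alpha)$ is a sum over primes $p$ of independent mean-zero terms $\Re(U_p p^{-ih})/p^{1/2}$, I would integrate by parts in each variable $U_p$ (writing $U_p = e^{i\theta_p}$ and integrating by parts in $\theta_p$). This is the non-Gaussian analogue of Gaussian integration by parts: for a single prime the identity $\int_0^{2\pi} \cos(\theta-a)\,g(\theta)\,\frac{\rd\theta}{2\pi} = \int_0^{2\pi}\sin(\theta-a)\,g'(\theta)\,\frac{\rd\theta}{2\pi}$ plays the role of Stein's lemma, and because each term is bounded by $p^{-1/2}$ the higher-order remainder terms (third derivatives) are summable and contribute $\oo(\log\log T)$ after division. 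Carrying this out term by term and resumming over $p\le \exp((\log T)^\alpha)$ produces, to leading order,
$$
\frac{\partial F_T}{\partial u}(\beta;\alpha,0)
= \beta^2\,\E\Big[\int\!\!\int_{[0,1]^2}\big(\E[X_h(\alpha)X_{h'}] - \E[X_h(\alpha)X_{h'}(\alpha)]\cdot(\text{diagonal correction})\big)\,\rd G_{\beta,T}^{\times 2}(h,h')\Big] + \oo(\log\log T),
$$
where the cross-covariance $\E[X_h(\alpha)X_{h'}]$ is, by the same Prime Number Theorem estimate as in \eqref{eqn: covariance2}, equal to $\frac12\min\big(\alpha\log\log T,\ \log|h-h'|^{-1}\big) + \OO(1)$.

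Given this, I would use the explicit form of the truncated covariance: dividing by $\frac12\log\log T$, the quantity $\frac{2}{\beta^2\log\log T}\frac{\partial F_T}{\partial u}(\beta;\alpha,0)$ becomes $\E\big[\int\!\!\int \min(\alpha,\rho(h,h'))\,\rd G_{\beta,T}^{\times 2}(h,h')\big] + \oo(1)$, using $\rho(h,h') = \frac{\log|h-h'|^{-1}}{\log\log T} + \oo(1)$ from \eqref{eqn: log cor} together with the fact that the self-overlap term $\E[X_h(\alpha)X_h(\alpha)] = \frac{\alpha}{2}\log\log T + \OO(1)$ produces exactly the $\min$ truncation at level $\alpha$; one should be slightly careful on the event $|h-h'| < (\log T)^{-1}$ where \eqref{eqn: log cor} fails, but this contributes overlap close to $1 > \alpha$, so $\min(\alpha,\rho)=\alpha$ there and no error is incurred. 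Finally, the elementary identity $\min(\alpha,\rho) = \int_0^\alpha \1_{\{\rho \ge y\}}\,\rd y + $ (a piece that vanishes) — more precisely $\int_0^\alpha \1_{\{\rho(h,h') > y\}}\,\rd y = \min(\alpha,\rho(h,h'))$ when $\rho \ge 0$, hence $\int_0^\alpha \P\text{-type integral} = \alpha - \int_0^\alpha \E[G^{\times2}\{\rho \le y\}]\,\rd y$ — lets me rewrite everything in terms of $\int_0^\alpha \E[G_{\beta,T}^{\times2}\{\rho(h,h')\le y\}]\,\rd y$, which is the left-hand side of the proposition. Matching the two expressions yields the claimed $\oo(1)$ bound.

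The main obstacle I anticipate is controlling the error in the non-Gaussian integration by parts uniformly in $T$: unlike the exactly-Gaussian SK setting of Carmona--Hu and Auffinger--Chen, here the Taylor remainder after the first-order term involves the second and third $\theta_p$-derivatives of $h\mapsto e^{\beta X_h}/Z_{\beta,T}$, which brings down factors of $\beta$ and additional copies of $\Re(U_p p^{-ih})/p^{1/2}$ together with derivatives of $Z_{\beta,T}$; one must show these remainders, summed over all primes $p \le \exp((\log T)^\alpha)$, are $\oo(\log\log T)$ so that they disappear after the normalization $\frac{2}{\beta^2\log\log T}$. This should follow because each prime contributes $\OO(p^{-3/2})$ to the third-order remainder (summable) and $\OO(p^{-1})$ to the second-order term, but the second-order term is actually the one producing the genuine covariance contributions above, so the bookkeeping — separating the "signal" $\OO(p^{-1})$ terms that build the $\min(\alpha,\rho)$ from the "error" terms — is the delicate part of the argument. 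A secondary technical point is justifying the interchange of $\frac{\partial}{\partial u}$ with $\E$ and with $\int_0^1$, which follows from dominated convergence using $|X_h(\alpha)| \le \sum_{p} p^{-1/2} = \OO(\exp((\log T)^\alpha/2))$ and the strict positivity of $Z_{\beta,T}$, but should be stated.
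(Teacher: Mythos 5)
Your overall strategy is the same as the paper's: differentiate $F_T$ in $u$, perform an approximate (non-Gaussian) integration by parts prime by prime in the spirit of Carmona--Hu with an $\OO(p^{-3/2})$ remainder per prime, evaluate the resulting covariance sums via the Prime Number Theorem, and match against the Fubini identity $\int_0^\alpha G_{\beta,T}^{\times 2}\{\rho\leq y\}\,\rd y=\int(\alpha-\rho)\1_{\{\rho\leq\alpha\}}\,\rd G_{\beta,T}^{\times 2}$. The error-control issues you flag (summability of the higher-order remainders, the regime $|h-h'|<(\log T)^{-1}$ where \eqref{eqn: log cor} degrades) are exactly the ones the paper handles.

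There is, however, a concrete bookkeeping error that breaks your final matching step as written. The integration by parts does not produce the cross-covariance $\E[X_h(\alpha)X_{h'}]$ alone; it produces the \emph{difference} of the diagonal and cross terms --- per prime, the quantity $\tfrac{\beta}{2}p^{-1}\bigl(1-\cos(|h-h'|\log p)\bigr)$, arising as $\langle|\omega_p|^2\rangle-|\langle\omega_p\rangle|^2$ in the paper's Proposition \ref{prop: carmona-hu}. Summed over $\log p\leq(\log T)^{\alpha}$ and normalized by $\tfrac{2}{\beta^2\log\log T}$, this yields $\alpha-\min(\alpha,\rho(h,h'))=(\alpha-\rho)\1_{\{\rho\leq\alpha\}}$, not $\min(\alpha,\rho)$ as you claim. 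Your own identity $\int_0^\alpha\E[G_{\beta,T}^{\times2}\{\rho\leq y\}]\,\rd y=\alpha-\E\bigl[\int\min(\alpha,\rho)\,\rd G_{\beta,T}^{\times2}\bigr]$ then matches the corrected derivative exactly; with your stated value $\E\bigl[\int\min(\alpha,\rho)\,\rd G_{\beta,T}^{\times2}\bigr]$ for the normalized derivative, the two sides of the Proposition would instead differ by $\alpha-2\,\E\bigl[\int\min(\alpha,\rho)\,\rd G_{\beta,T}^{\times2}\bigr]$, which is not $\oo(1)$. Relatedly, your displayed formula $\E[X_h(\alpha)X_{h'}]-\E[X_h(\alpha)X_{h'}(\alpha)]\cdot(\text{diagonal correction})$ is not well formed: those two covariances are equal, since the tail $X_{h'}(\alpha,1)$ is independent of $X_h(\alpha)$; the subtraction you need is of the self-covariance $\E[X_h(\alpha)X_h]=\tfrac{\alpha}{2}\log\log T+\OO(1)$ against the cross-covariance. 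Once this sign/structure is fixed, your argument coincides with the paper's.
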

The free energy of the perturbed process is calculated using Kistler's multiscale second moment method \cite{kistler}.
The treatment is similar to the one of Arguin \& Ouimet \cite{arguin-ouimet} for the perturbed Gaussian free field.
The same result can be obtained by adapting the method of Bolthausen, Deuschel \& Giacomin \cite{bolthausen-deuschel-giacomin} and Daviaud \cite{daviaud} to the model as was done in \cite{arguin-zindy1,arguin-zindy2}.
Kistler's method is simpler and more flexible.
The result is better stated by first defining
\begin{equation}
\label{eqn: f}
f(\beta, \sigma^2)=
\begin{cases}
\beta^2\sigma^2/4 \ \ &\text{if $\beta\leq 2/\sigma$,}\\
\beta\sigma-1\ \ &\text{if $\beta\geq 2/\sigma$.}
\end{cases}
\end{equation}
\begin{prop}
\label{prop: free energy}
For every $\beta>0$ and $|u|<1$, the following limit holds
$$
\lim_{T\to\infty}\frac{1}{\log\log T} F_T(\beta;\alpha, u)
=
\begin{cases}
f\big(\beta, (1+u)^2\alpha+ (1-\alpha)\big) \ \ &\text{ if $u<0$,}\\
\alpha f(\beta, (1+u)^2) + (1-\alpha) f(\beta,1) \ \ &\text{ if $u\geq 0$.}
\end{cases}
$$
\end{prop}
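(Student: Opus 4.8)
The plan is to establish matching upper and lower bounds for $\tfrac{1}{t}\log Z_T$, where $t=\log\log T$ and $Z_T=\int_0^1\exp\bigl(\beta(X_h+uX_h(\alpha))\bigr)\,\rd h$, and then transfer the conclusion to $F_T=\E\log Z_T$ by concentration. The starting point is the scale splitting $X_h+uX_h(\alpha)=(1+u)X_h(\alpha)+W_h$ with $W_h=X_h-X_h(\alpha)$, which is built from the primes with $(\log T)^\alpha<\log p\le T$ and is therefore independent of $X_{\cdot}(\alpha)$. By \eqref{eqn: covariance1}, \eqref{eqn: covariance2} and Mertens' theorem, $X_{\cdot}(\alpha)$ is, to leading order, a log-correlated field on $[0,1]$ of total variance $\tfrac{\alpha}{2}t$ that is essentially constant on windows of length $(\log T)^{-\alpha}$, while on each such window $W_{\cdot}$ is an independent log-correlated field of total variance $\tfrac{1-\alpha}{2}t$; two points at distance $(\log T)^{-\gamma}$ share perturbed covariance $q(\gamma)=\tfrac{t}{2}\bigl((1+u)^2\min(\gamma,\alpha)+(\gamma-\alpha)^+\bigr)$, and the number of essentially decorrelated windows at scale $\gamma$ is $e^{\gamma t}$. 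The function $f$ in \eqref{eqn: f} is precisely the transform $f(\beta,\sigma^2)=\max_{0\le q\le\sigma}\bigl(\beta q-q^2/\sigma^2\bigr)$ coming from the one-point rate function. The only place non-Gaussianity enters is through the Laplace transforms, which here are products $\prod_p I_0(\lambda c_p)$ of modified Bessel functions with $c_p\ll p^{-1/2}$; since $\log I_0(z)=z^2/4+\OO(z^4)$ and $\sum_p c_p^4=\OO(1)$, all tilted one-point estimates agree with the Gaussian ones up to $\OO(1)$ (the finitely many small primes contribute an $h$-dependent but bounded term, absorbed in the error for each fixed $\beta$).

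For the upper bound I would run the truncated-union (first moment) bound of Kistler's multiscale method over $K$ geometric sub-scales. Requiring that the expected number of windows at scale $\gamma$ on which the field exceeds $mt$ tends to $0$, and optimizing the exponential tilt, yields $\tfrac{1}{t}\log Z_T\le\min_{\gamma\in[0,1]}\Phi_\beta(\gamma)+\e$ for a function $\Phi_\beta$ determined by $q(\gamma)$ and the window count $e^{\gamma t}$. Because $\gamma\mapsto q(\gamma)$ is concave when $u\ge 0$ and convex when $u<0$, this minimum is attained at the interior breakpoint $\gamma=\alpha$ in the first case and only at the endpoint $\gamma=1$ in the second; an elementary optimization then shows the two values equal $\alpha f(\beta,(1+u)^2)+(1-\alpha)f(\beta,1)$ and $f\bigl(\beta,(1+u)^2\alpha+(1-\alpha)\bigr)$, respectively. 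Making this rigorous needs the uniform-over-windows version of the first moment bound and the first-order upper bounds on $\max_h X_h(\alpha)$ and $\max_h W_h$, both of which follow from the one-point estimates together with a net over $[0,1]$ of cardinality $\log T$.

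For the lower bound I would use the multiscale second moment method. When $u\ge 0$ this is a two-stage construction: first retain the $\approx e^{(\alpha-q_1^2/((1+u)^2\alpha))t}$ windows on which $(1+u)X_{\cdot}(\alpha)\ge q_1 t$, then, using the independence of $W$, retain within a positive proportion of them the sub-windows on which $W\ge q_2 t$, giving $\tfrac{1}{t}\log Z_T\ge\beta(q_1+q_2)-\tfrac{q_1^2}{(1+u)^2\alpha}-\tfrac{q_2^2}{1-\alpha}-\e$; optimizing over $0\le q_1\le(1+u)\alpha$ and $0\le q_2\le 1-\alpha$ reproduces the claimed value. When $u<0$ the convexity of $q(\cdot)$ makes a two-level construction suboptimal, so one works in a single stage with $Y_h:=X_h+uX_h(\alpha)$ itself, whose shared-covariance profile lies below its chord; this is exactly the condition under which the (truncated) second moment reaches the REM value $f\bigl(\beta,(1+u)^2\alpha+(1-\alpha)\bigr)$. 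The main obstacle is precisely here: for a field with a continuum of scales the plain second moment on the near-optimal level set fails before the freezing temperature is reached, so one must run the scale-by-scale truncated second moment --- conditioning on the coarse scales and applying Paley--Zygmund only to the weakly correlated fine increments --- and, in the low-temperature phase where $\beta$ exceeds the freezing point, this is the same mechanism that produces the sharp lower bound on the maximum. That lower bound on the max is the most delicate step, and it is where the non-Gaussianity must be confronted directly: one needs the two-point upper-tail estimates for the prime sums, not merely control of their variances, exactly as in Harper and in Arguin--Belius--Harper.

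Finally, changing a single $U_p$ alters $\beta(X_h+uX_h(\alpha))$ by at most $2\beta(1+|u|)p^{-1/2}$ uniformly in $h$, hence alters $\log Z_T$ by at most the same amount; the bounded-difference inequality together with $\sum_p p^{-1}=t+\OO(1)$ gives $\mathrm{Var}(\log Z_T)=\OO(t)$. Thus $\tfrac{1}{t}\log Z_T$ concentrates around $\tfrac{1}{t}F_T$, and (using also the deterministic bound $\log Z_T\ge\beta\int_0^1 X_h\,\rd h$ from Jensen to rule out degeneracy of the lower tail) the in-probability limits above pass to the asserted limit for $\tfrac{1}{t}F_T$.
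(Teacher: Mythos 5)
Your proposal is correct and follows essentially the same route as the paper: the same scale decomposition $(1+u)X_h(\alpha)+X_h(\alpha,1)$, Kistler's multiscale first--moment upper bound (with the entropy truncation $X_h(\alpha)\lesssim(1+u)\alpha\log\log T$ that creates the $\gamma_c$ dichotomy for $u>0$), a coarse-grained truncated second moment with Paley--Zygmund for the lower bound, Gaussian-quality tail estimates from the explicit moment generating function $\prod_p I_0(\cdot)$ together with the Berry--Esseen two-point bounds of Arguin--Belius--Harper, and Laplace's method to pass from the high-point measure to the free energy. The one place you genuinely diverge is the final step from the in-probability limit of $\tfrac{1}{t}\log Z_T$ to the limit of $\tfrac{1}{t}\E\log Z_T$: you invoke Efron--Stein/bounded differences in the independent $U_p$'s, giving $\mathrm{Var}(\log Z_T)=\OO(\log\log T)$ since $\sum_{p\le T}p^{-1}=\log\log T+\OO(1)$, whereas the paper proves uniform integrability of $\tfrac1t\log Z_T$ via Markov and Jensen bounds on both tails. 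Both mechanisms are valid here; your concentration argument is arguably shorter and yields a quantitative variance bound as a by-product, while the paper's uniform integrability gives $L^1$ convergence directly without needing the a.s. Lipschitz control in each coordinate.
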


The theorem follows from the above two propositions. They are proved in Sections 3 and 4 respectively.
Estimates on the model needed for the proofs are given in Section 2. 

\begin{proof}[Proof of Theorem \ref{thm: main}]
We need to show that the distribution of $\rho(h,h')$ converges weakly to $\frac{2}{\beta}\delta_0+(1-\frac{2}{\beta})\delta_1$ where $\delta_a$ stands for the Dirac measure at $a$. 
Write $x_{\beta, T}(s)$ for $\E[G_{\beta, T}^{\times 2}\big\{(h,h'): \rho(h,h')\leq s \big\}]$. By compactness of the space of probability measures on $[0,1]$, we can find a subsequence of $(x_{\beta, T})$ that converges weakly to $x_\beta$ as $T\to\infty$. We show that the limit $x_\beta$ is unique and equals $x_\beta(s)=2/\beta$ for $0\leq s <1$, thereby proving the claimed convergence. 

By definition of weak convergence, $x_{\beta,T}(s)$ converges to $x_\beta(s)$ at all points of continuity of $s$. Since $x_\beta$ is non-decreasing, this implies convergence almost everywhere. Thus, the dominated convergence theorem implies
\begin{equation}
\label{eqn: subsequence}
\lim_{T\to\infty}\int_0^\alpha x_{\beta, T}(s) \ \rd s=\int_0^\alpha x_{\beta}(s) \ \rd s\ \ \text{ , for $0<\alpha<1$.}
\end{equation}
The left-hand side can be rewritten using Proposition \ref{prop: overlap free} as
\begin{equation}
\label{eqn: derivative}
\lim_{T\to\infty}\int_0^\alpha x_{\beta, T}(s) \ \rd s
= \lim_{T\to\infty}\frac{2}{\beta^2\log\log T} \frac{\partial  F_T}{\partial u}(\beta;\alpha, 0)\ .
\end{equation}
Since $\big(((\log\log T)^{-1} F_T(\beta;\alpha, u)\big)_T$ is a sequence of convex functions of $u$, the limit of the derivatives is the derivative of the limit at any point of differentiability. 
Here the limit of the expectation of the free energy is given by Proposition \ref{prop: free energy}, for $u$ small enough so that $\beta>2/\sigma$ whenever $\beta >2$,
\begin{equation}
\label{eqn: derivative free}
\begin{aligned}
\lim_{T\to\infty}\frac{1}{\log\log T} F_T(\beta;\alpha, u)
&=
\begin{cases}
\beta\Big((1+u)^2\alpha+ (1-\alpha)\Big)^{1/2}-1 \ \ &\text{ if $u<0$,}\\
\alpha \beta (1+u) +(1- \alpha) \beta -1 \ \ &\text{ if $u\geq 0$.}
\end{cases}
\end{aligned}
\end{equation}
In particular, the expected free energy is differentiable at $u=0$. Therefore, equations \eqref{eqn: subsequence}, \eqref{eqn: derivative} and  \eqref{eqn: derivative free} altogether imply
$$
\int_0^\alpha x_{\beta}(s) \ \rd s= \alpha \frac{2}\beta\  \ \text{ , for $0<\alpha<1$.}
$$
This means that for any $0<\alpha<\alpha'<1$ we have
$$
\frac{1}{\alpha'-\alpha}\int_\alpha^{\alpha'} x_{\beta}(s) \ \rd s= \frac{2}\beta\ .
$$
By taking $\alpha'-\alpha\to 0$, we conclude from the Lebesgue differentiation theorem that $x_{\beta}(s)=2/\beta$ almost everywhere. Since $x_\beta$ is non-decreasing and right-continuous, this implies that $x_\beta(s)=2/\beta$ for every $0\leq s<1$ as claimed. 
\end{proof}

\section{Estimates on the model of zeta}

In this section, we gather the estimates on the model of zeta needed for the proof of Propositions \ref{prop: overlap free} and \ref{prop: free energy}.
Most of these results are contained in \cite{arguin-belius-harper}. 
We include them for completeness since we will need to deal with a perturbed version of the process $(X_h)$.
It is is important to point out that most (but not all!) of these estimates can be obtained for zeta itself with some more work, see \cite{ABBRS}. 

The essential input from number theory for the model is the Prime Number Theorem \eqref{eqn: PNT}. 
It shows that the density of the primes is approximately $1/\log p$. 
This implies, for example, that $\sum_{p} p^{-a}<\infty$ for $a>1$. 
The equation \eqref{eqn: covariance2} expressing the log-correlations for $h\neq h'$ is straightforward from the following lemma by taking $\Delta=|h-h'|$ and by splitting the sum \eqref{eqn: covariance1} into the ranges $\log p\leq |h-h'|^{-1}$ and $|h-h'|^{-1}<\log p\leq \log T$.
\begin{lem}
\label{lem: sum primes}
Let $2\leq P<Q<\infty$. Then for $\Delta>0$, we have
\begin{equation}
\label{eqn: cos p}
\begin{aligned}
\sum_{P\leq p\text{ primes}\leq Q} \frac{\cos (\Delta\cdot \log p)}{p}&= \int_P^Q \frac{\cos (\Delta\cdot \log v)}{v\log v} \rd v+ \OO(e^{-c\sqrt{\log P}})\\
&=
\begin{cases}
 \log\log Q -\log\log P +\OO(1) \ & \text{ for $\Delta\cdot \log Q\leq 1$,}\\
 \OO(\frac{1}{\Delta\cdot \log P})+ \OO(e^{-c\sqrt{\log P}}) & \text{ for $\Delta\cdot\log P\geq  1$.}
\end{cases}
\end{aligned}
\end{equation}
\end{lem}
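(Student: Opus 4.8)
The plan is to reduce the prime sum to an elementary integral by partial summation against the Prime Number Theorem \eqref{eqn: PNT}, and then to estimate that integral by hand in each of the two ranges of $\Delta$. First I would set $g(v)=\cos(\Delta\log v)/v$ and $E(v)=\#\{p\le v\}-\int_2^v\frac{\rd y}{\log y}$, so that \eqref{eqn: PNT} says exactly $E(v)=\OO(v e^{-c\sqrt{\log v}})$. Abel summation, that is, Riemann--Stieltjes integration against $\rd\#\{p\le v\}=\frac{\rd v}{\log v}+\rd E(v)$, then gives
\[
\sum_{P\le p\le Q}\frac{\cos(\Delta\log p)}{p}=\int_P^Q\frac{\cos(\Delta\log v)}{v\log v}\,\rd v+\int_P^Q g(v)\,\rd E(v)+\OO(P^{-1}),
\]
the last term absorbing the prime masses possibly sitting exactly at the endpoints $P,Q$. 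The first integral is already the claimed main term.

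Next I would control the error integral $\int_P^Q g\,\rd E$ by one integration by parts, $\int_P^Q g\,\rd E=[gE]_P^Q-\int_P^Q g'E\,\rd v$. The boundary terms are $\OO(e^{-c\sqrt{\log P}})$ since $|g(v)|\le v^{-1}$ and $|E(v)|\ll v e^{-c\sqrt{\log v}}$; for the remaining integral one has $|g'(v)|\le(1+\Delta)v^{-2}$, so it is $\ll(1+\Delta)\int_P^Q \frac{e^{-c\sqrt{\log v}}}{v}\,\rd v\ll(1+\Delta)\sqrt{\log P}\,e^{-c\sqrt{\log P}}$, as one sees after the substitution $v=e^{t^2}$. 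Since $\Delta=|h-h'|\le1$ in every use of the lemma (cf.\ \eqref{eqn: covariance2}), this is $\OO(e^{-c'\sqrt{\log P}})$ for any $c'<c$, which establishes the first displayed equality.

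It then remains to estimate $\int_P^Q\frac{\cos(\Delta\log v)}{v\log v}\,\rd v$. The substitutions $u=\log v$ and then $t=\Delta u$ turn it into $\int_a^b\frac{\cos t}{t}\,\rd t$ with $a=\Delta\log P$ and $b=\Delta\log Q$. When $b\le1$ I would split $\frac{\cos t}{t}=\frac1t+\frac{\cos t-1}{t}$ and use $|\cos t-1|\le t^2/2$ to get $\int_a^b\frac{\cos t}{t}\,\rd t=\log(b/a)+\OO(1)=\log\log Q-\log\log P+\OO(1)$. When $a\ge1$ I would integrate by parts once more, $\int_a^b\frac{\cos t}{t}\,\rd t=\frac{\sin b}{b}-\frac{\sin a}{a}+\int_a^b\frac{\sin t}{t^2}\,\rd t=\OO(a^{-1})=\OO\big((\Delta\log P)^{-1}\big)$. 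Combining with the error bound from the previous step yields the two cases. The whole argument is routine manipulation of partial summation and oscillatory integrals; the one place that calls for care is the error term, where differentiating $\cos(\Delta\log v)$ costs a factor $\Delta$ — harmless here because $\Delta$ stays bounded, but it is the step where one must resist treating $\Delta$ as arbitrarily large.
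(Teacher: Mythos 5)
Your proposal is correct and follows essentially the same route as the paper: partial summation against the Prime Number Theorem to isolate the main integral plus an error controlled by one integration by parts, then the same two-case treatment of $\int_a^b \frac{\cos t}{t}\,\rd t$ (Taylor expansion of the cosine when $b\leq 1$, integration by parts when $a\geq 1$). Your explicit tracking of the factor $1+\Delta$ in $|g'|$ and of the $\sqrt{\log P}$ loss in the error integral is a slightly more careful accounting than the paper's ``$f'(x)$ is of the order of $1/x^2$,'' but it changes nothing in substance since $\Delta=|h-h'|\leq 1$ in every application.
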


\begin{proof}
Denote by ${\rm Li}(x)=\int_2^x \frac{1}{\log y} \rd y$ the logarithmic integal.
Write $\mathcal E (x)$ for the function of bounded variation $\pi(x)-{\rm Li}(x)$ giving the error, and $f(x)$ for $\frac{\cos(\Delta\cdot \log x)}{x}$. Clearly, we have
$$
\sum_{P\leq p\leq Q} f(p )=\int_P^Q f(x) \pi(dx)= \int_P^Q \frac{f(x)}{\log x} dx + \int_P^Q f(x) \mathcal E(dx)\ .
$$
It remains to estimate the error term. By integration by parts,
$$
\int_P^Q f(x) \mathcal E(dx)=
 \mathcal E(Q ) f(Q )-  \mathcal E(P ) f(P ) - \int_P^Q \mathcal E(x) f'(x) dx\ .
$$
Note that $f(x)$ is of the order of $1/x$ and $f'(x)$ is of the order of $1/x^2$. Since $\mathcal E(x)=O(x e^{-c\sqrt{\log x}})$, the first claimed equality follows.
For the dichotomy in the second equality,  in the case $\Delta\cdot\log Q\leq 1$, we expand the cosine to get after the change of variable $y=\log x$
$$
 \int_{ P}^{Q} \frac{f(x)}{\log x} \rd x=\int_{\log P}^{\log Q} \frac{\cos(\Delta\cdot y)}{y} \rd y=\int_{\log P}^{\log Q} \left(\frac{1}{y}+\OO(\Delta^2\cdot y)\right)\  \rd y\ .
$$
The result follows by integration. In the case $\Delta\cdot \log P \geq 1$, we integrate by parts to get
$$
\int_P^Q \frac{f(x)}{\log x} dx=\frac{\sin(\Delta\cdot y)}{\Delta\cdot y}\Big|_{\log P}^{\log Q}+\int_{\log P}^{\log Q} \frac{\sin(\Delta\cdot y)}{\Delta\cdot y^2} \rd y\ .
$$
Both terms are  $\OO(\frac{1}{\Delta\cdot \log P})$ as claimed. 
\end{proof}

Proposition \ref{prop: free energy} gives an expression for the free energy \eqref{eqn: f_T} of the perturbed process at scale $\alpha$.
For simplicity, we denote this process by
\begin{equation}
\label{eqn: X tilde}
\widetilde X_h= (1+u)X_h(\alpha) + X_h(\alpha,1) \  \ \text{for $X_h(\alpha,1)=X_h-X_h(\alpha)$, $h\in [0,1]$.}
\end{equation}
 Note that we recover $X_h$ at $u=0$.
The finite-dimensional distributions of $(\widetilde X_h)$ can be explicitly computed.
In fact, it is not hard to compute explicitly the moment generating function for any increment of $(X_h)$.
We will only need the two-dimensional case.
\begin{prop}
\label{prop: MGF}
Let $0\leq \alpha_1<\alpha_2\leq 1$. Consider $X_h(\alpha_1,\alpha_2)=X_h(\alpha_2)-X_h(\alpha_1)$. We have for $\lambda, \lambda'\in \R$ and $h,h'\in [0,1]$,
$$
\begin{aligned}
&\E\left[\exp\left(\lambda X_h(\alpha_1,\alpha_2) + \lambda'X_{h'}(\alpha_1,\alpha_2\right)\right]\\
&\hspace{2cm}=C(\lambda,\lambda')\cdot \exp\left(\frac{1}{2}\sum_{\substack{\log p\leq (\log T)^{\alpha_2}\\ \log p>(\log T)^{\alpha_1} }}
\frac{1}{2p}\Big(\lambda^2+{\lambda'}^2+2\lambda\lambda'\cos(|h-h'|\log p)\Big)\right)\ ,
\end{aligned}
$$ 
where $C=C(\lambda, \lambda')$ is bounded if $\lambda$ and $\lambda'$ are bounded uniformly in $T$. 
\end{prop}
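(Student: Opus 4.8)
The plan is to exploit the independence of the $U_p$'s to factor the moment generating function over primes, evaluate each single-prime factor exactly in terms of a rotation-invariant integral, and then control the resulting product by a Taylor expansion.

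First I would write $U_p = e^{\ii\theta_p}$ with $\theta_p$ uniform on $[0,2\pi)$, so that $p^{-1/2}\Re(U_p p^{-\ii h}) = p^{-1/2}\cos(\theta_p - h\log p)$ and $X_h(\alpha_1,\alpha_2) = \sum_p p^{-1/2}\cos(\theta_p - h\log p)$, the sum running over primes with $(\log T)^{\alpha_1} < \log p \le (\log T)^{\alpha_2}$. Since the $\theta_p$ are independent, the expectation factors as
$$
\E\left[\exp\left(\lambda X_h(\alpha_1,\alpha_2) + \lambda' X_{h'}(\alpha_1,\alpha_2)\right)\right] = \prod_p \E\left[\exp\left(\tfrac{\lambda}{\sqrt p}\cos(\theta_p - h\log p) + \tfrac{\lambda'}{\sqrt p}\cos(\theta_p - h'\log p)\right)\right].
$$
Expanding the two cosines and collecting the coefficients of $\cos\theta_p$ and $\sin\theta_p$, the exponent of the $p$-th factor equals $R_p\cos(\theta_p - \psi_p)$ for some phase $\psi_p$, where a one-line computation using $\cos a\cos b + \sin a \sin b = \cos(a-b)$ gives
$$
R_p^2 = \frac{1}{p}\left(\lambda^2 + {\lambda'}^2 + 2\lambda\lambda'\cos(|h-h'|\log p)\right).
$$
By rotation invariance of the uniform law on the circle, $\E[\exp(R_p\cos(\theta_p - \psi_p))] = \frac{1}{2\pi}\int_0^{2\pi} e^{R_p\cos\theta}\,\rd\theta =: I_0(R_p)$.

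Next I would insert the power series $I_0(r) = \sum_{k\ge 0}(r/2)^{2k}/(k!)^2 = 1 + r^2/4 + \OO(r^4)$, the error being uniform on any bounded range of $r$, to write $\log I_0(R_p) = R_p^2/4 + r_p$ with $|r_p| = \OO(R_p^4)$. Summing over primes in the range yields
$$
\E\left[\exp\left(\lambda X_h(\alpha_1,\alpha_2) + \lambda' X_{h'}(\alpha_1,\alpha_2)\right)\right] = \exp\left(\frac14\sum_p R_p^2\right)\cdot \exp\left(\sum_p r_p\right),
$$
and since $\frac14\sum_p R_p^2 = \frac12\sum_p \frac{1}{2p}\big(\lambda^2 + {\lambda'}^2 + 2\lambda\lambda'\cos(|h-h'|\log p)\big)$, the first factor is precisely the exponential appearing in the statement; one then sets $C(\lambda,\lambda') = \exp(\sum_p r_p)$.

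The only point requiring care — hence the one I regard as the main obstacle, modest as it is — is the uniform-in-$T$ boundedness of $C(\lambda,\lambda')$, since $R_p$ is \emph{not} uniformly small (for the small primes and large $\lambda,\lambda'$ it is merely bounded). For $|\lambda|,|\lambda'|\le M$ one has $R_p^2 \le 4M^2/p \le 2M^2$, so the Bessel remainder obeys $|r_p| \le c_M R_p^4 \le c_M' p^{-2}$; because $\sum_p p^{-2} < \infty$ (a consequence of the Prime Number Theorem \eqref{eqn: PNT}), the series $\sum_p r_p$ converges absolutely with a bound depending only on $M$ (the sum over the truncation range being dominated by the sum over all primes), so $|C(\lambda,\lambda')| \le \exp(c_M' \sum_p p^{-2})$. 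The identity itself is an exact computation, and the same argument in fact produces the analogous closed form for the joint moment generating function of any finite collection $X_{h_1}(\alpha_1,\alpha_2),\dots,X_{h_n}(\alpha_1,\alpha_2)$; only the two-point case is recorded here since that is all that is used later.
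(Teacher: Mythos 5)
Your proof is correct and follows essentially the same route as the paper: factor the expectation over primes by independence, compute the single-prime factor exactly as the power series $\sum_m (|a|^2)^m/(m!)^2$ (which is your $I_0(R_p)$, with $R_p=2|a|$), and absorb everything beyond the quadratic term into a bounded constant via $\sum_p p^{-2}<\infty$. The only difference is cosmetic — the paper expands $\E[\exp(aU_p+\bar a\overline{U_p})]$ using moments of $U_p$ where you combine the two cosines into one and invoke rotation invariance.
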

\begin{proof}
The expression can be evaluated explicitly as follows. Since the $U_p$'s are independent, we can first restrict the computation to a single $p$.
Straightforward manipulations yield
$$
\E\left[\exp\left(p^{-1/2}\lambda \cdot \Re(U_pp^{-\ii h})+ p^{-1/2}\lambda'\cdot\Re(U_pp^{-\ii h'})\right)\right]
=\E\left[\exp(aU_p+\bar{a}\overline U_p)\right]
$$
for $a=(2p^{1/2})^{-1}(\lambda p^{-\ii h}+\lambda' p^{-\ii h'})$. By expanding the exponentials and using the fact that $U_p$ is uniform on the unit circle, we get
\begin{equation}
\label{eqn: mgf p}
\begin{aligned}
\E\left[\exp(aU_p+\bar{a}\overline U_p)\right]
&= \sum_{n=0}^\infty\sum_{k=0}^n  \frac{a^k\bar{a}^{n-k}}{n!}{n\choose k}\E[U_p^k\overline U_p^{n-k}]\\
&=\sum_{m=0}^\infty  \frac{1}{(m!)^2}\left(\frac{\lambda^2+{\lambda'}^2+2\lambda\lambda'\cos(|h-h'|\log p)}{4p}\right)^m\\
&= 1+ \left(\frac{\lambda^2+{\lambda'}^2+2\lambda\lambda'\cos(|h-h'|\log p)}{4p}\right)+\OO(p^{-2})\ ,
\end{aligned}
\end{equation}
where the $\OO$-term depends on $\lambda,\lambda'$. The second equality follows from the fact that the expectation is non-zero only if $k=n/2$. 
It remains to take the product over the range of $p$.
The claim then follows from the fact that the sum of $p^{-2}$ is finite by \eqref{eqn: PNT}.
\end{proof}
Proposition \ref{prop: MGF} yields Gaussian bounds in the large deviation regime we are interested in. 
Indeed, by Chernoff's bound (optimizing over $\lambda$), it implies that, for $\gamma>0$, 
\begin{equation}
\label{eqn: increment one point}
\P\left(X_h(\alpha_1,\alpha_2)>\gamma\log\log T\right)
\ll \exp\left(-\frac{\gamma^2 \log\log T}{(\alpha_2-\alpha_1)}\right)=(\log T)^{\frac{-\gamma^2}{\alpha_2-\alpha_1}}\ ,
\end{equation}
where we used Lemma \ref{lem: sum primes} to estimate the sum over primes.
This supports the heuristic that $X_h(\alpha_1,\alpha_2)$ is approximately Gaussian of variance $\frac{\alpha_2-\alpha_1}{2}\log\log T$. 
This implies for $\widetilde X_h$ in \eqref{eqn: X tilde}
\begin{equation}
\label{eqn: X tilde one-point}
\P\left(\widetilde X_h>\gamma\log\log T\right)
\ll \exp\left(-\frac{\gamma^2 \log\log T}{(1+u)^2\alpha+(1-\alpha)}\right)=(\log T)^{\frac{-\gamma^2}{(1+u)^2\alpha+(1-\alpha)}}\ .
\end{equation}
The same can be done for two points $h,h'$. Using Lemma \ref{lem: sum primes} again, we get
\begin{equation}
\label{eqn: increment two points}
\begin{aligned}
&\P\left(X_h(\alpha_1,\alpha_2)>\gamma\log\log T,X_{h'}(\alpha_1,\alpha_2)>\gamma \log\log T \right)\\
&\hspace{3cm}\ll 
\begin{cases}
\exp\left(-\frac{\gamma^2 \log\log T}{(\alpha_2-\alpha_1)}\right) \ &\text{ if $|h-h'|\leq (\log T)^{-\alpha_2}$,}\\
\exp\left(-2\frac{\gamma^2 \log\log T}{(\alpha_2-\alpha_1)}\right)\ &\text{ if $|h-h'|\geq (\log T)^{-\alpha_1}$.}\\
\end{cases}
\end{aligned}
\end{equation}
This can be interpreted as follows. The increments are (almost) independent if the distance between the points is larger than the relevant scales of the increments, and are (almost) perfectly correlated if the distance is smaller than the scales. 

It is important to note that if $\alpha_1>0$, then a stronger estimate than the one of Proposition \ref{prop: MGF} holds. 
This is because the sum over primes in \eqref{eqn: mgf p} is then negligible since it is the tail of a summable series. 
This means that the constant $C(\lambda,\lambda')$ is then $1+\OO(1)$. 
This gives a precise Gaussian estimate by inverting the moment generating function (or the Fourier transform if we pick $\lambda,\lambda'\in \mathbb C$).
We omit the proof for conciseness and we refer to \cite{arguin-belius-harper} where this is done using a general version of the Berry-Esseen theorem.
\begin{prop}[see Propositions 2.9, 2.10, 2.11 in \cite{arguin-belius-harper}]
\label{prop: berry-esseen}
For $0<\alpha_1<\alpha_2\leq 1$ and $0<\gamma <1$, we have for $h\in [0,1]$,
$$
\P(X_{h}(\alpha_1,\alpha_2)>\gamma \log\log T)
\gg\frac{1}{\sqrt{\log\log T}}\exp\left(-\frac{\gamma^2 \log\log T}{(\alpha_2-\alpha_1)}\right)=(\log T)^{\frac{-\gamma^2}{\alpha_2-\alpha_1}+\oo(1)}\ .
$$
Moreover, if $|h-h'|>(\log T)^{-\alpha_1}$, then
$$
\P(X_{h}(\alpha_1,\alpha_2)>\gamma \log\log T, X_{h'}(\alpha_1,\alpha_2)>\gamma \log\log T)
=(1+\oo(1)) {\P(X_{h}(\alpha_1,\alpha_2)>\gamma \log\log T)}^2\ .
$$
\end{prop}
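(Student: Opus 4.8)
The plan is to upgrade the crude Chernoff bounds \eqref{eqn: increment one point}--\eqref{eqn: increment two points} to sharp moderate-deviation asymptotics, by combining the precise Gaussian form of the Laplace transform --- available here exactly because $\alpha_1>0$ --- with an exponential change of measure and the Berry--Esseen theorem for sums of independent, not necessarily identically distributed, bounded random variables, as carried out in \cite{arguin-belius-harper}. Write $L=\log\log T$, fix $0<\alpha_1<\alpha_2\le1$, put $X=X_h(\alpha_1,\alpha_2)$ and $\sigma_T^2=\tfrac{1}{2}\sum_{(\log T)^{\alpha_1}<\log p\le(\log T)^{\alpha_2}}p^{-1}=\tfrac{\alpha_2-\alpha_1}{2}L+\OO(1)$ by Lemma~\ref{lem: sum primes}. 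By \eqref{eqn: mgf p}, the logarithm of each per-prime factor equals $\tfrac{1}{4p}(\lambda^2+{\lambda'}^2+2\lambda\lambda'\cos(|h-h'|\log p))+\OO(p^{-2})$, and, since $\alpha_1>0$, the $\OO(p^{-2})$-terms are the tail of a convergent series over primes, hence sum to $\oo(1)$. Summing over the relevant range gives, uniformly for $\lambda,\lambda'$ in compacts and with the same control on all $\lambda$-derivatives,
\begin{align*}
\Lambda_T(\lambda)&:=\log\E\big[e^{\lambda X}\big]=\tfrac{\lambda^2}{2}\sigma_T^2+\oo(1),\\
\Lambda_T^{(2)}(\lambda,\lambda')&:=\log\E\big[e^{\lambda X_h(\alpha_1,\alpha_2)+\lambda' X_{h'}(\alpha_1,\alpha_2)}\big]=\tfrac{\sigma_T^2}{2}(\lambda^2+{\lambda'}^2)+\lambda\lambda' c_T+\oo(1),
\end{align*}
where $c_T=\E[X_h(\alpha_1,\alpha_2)X_{h'}(\alpha_1,\alpha_2)]$.

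For the one-point bound I pick $\lambda_\star=\lambda_\star(T)$ with $\Lambda_T'(\lambda_\star)=\gamma L$; then $\lambda_\star\sigma_T^2=\gamma L+\oo(1)$, so $\lambda_\star\to 2\gamma/(\alpha_2-\alpha_1)$ stays bounded, and the tilted law $\rd\widetilde\P=e^{\lambda_\star X-\Lambda_T(\lambda_\star)}\,\rd\P$ is well defined. Since the tilt factorises over primes, under $\widetilde\P$ the $U_p$ remain independent, each summand of $X$ is still bounded by $p^{-1/2}$, $\widetilde\E[X]=\gamma L$, and $\widetilde{\mathrm{Var}}(X)=\Lambda_T''(\lambda_\star)=\sigma_T^2(1+\oo(1))\asymp L$. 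The Berry--Esseen theorem then gives
\[
\sup_x\Big|\widetilde\P(X\le x)-\Phi\big((x-\gamma L)/\sqrt{\widetilde{\mathrm{Var}}(X)}\,\big)\Big|\ \ll\ \frac{\sum_p p^{-3/2}}{(\sigma_T^2)^{3/2}}\ \ll\ L^{-3/2},
\]
so $\widetilde\P\big(\gamma L<X\le\gamma L+1\big)=\Phi\big(1/\sqrt{\widetilde{\mathrm{Var}}(X)}\big)-\tfrac{1}{2}+\OO(L^{-3/2})\gg L^{-1/2}$, the CLT-scale Gaussian increment dominating the error. Writing $\P(X>\gamma L)=e^{\Lambda_T(\lambda_\star)}\widetilde\E\big[e^{-\lambda_\star X}\1_{\{X>\gamma L\}}\big]\ge e^{\Lambda_T(\lambda_\star)-\lambda_\star(\gamma L+1)}\,\widetilde\P\big(\gamma L<X\le\gamma L+1\big)$ and using $\Lambda_T(\lambda_\star)-\lambda_\star\gamma L=-\gamma^2L^2/(2\sigma_T^2)+\oo(1)=-\gamma^2L/(\alpha_2-\alpha_1)+\OO(1)$ yields $\P(X>\gamma L)\gg L^{-1/2}e^{-\gamma^2L/(\alpha_2-\alpha_1)}$; together with the matching upper bound \eqref{eqn: increment one point}, this equals $(\log T)^{-\gamma^2/(\alpha_2-\alpha_1)+\oo(1)}$.

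For the two-point statement I apply the symmetric tilt $\rd\widetilde\P^{(2)}\propto e^{\lambda_\star(X_h+X_{h'})}\,\rd\P$, adjusting $\lambda_\star$ by $\oo(1)$ so that both tilted marginals have mean $\gamma L$. The tilt again factorises over primes, so under $\widetilde\P^{(2)}$ the pair $(X_h,X_{h'})$ is a sum of independent, $\OO(p^{-1/2})$-bounded, two-dimensional increments with covariance $\sigma_T^2\,I+c_T(J-I)+\oo(1)$ (with $J$ the all-ones matrix). The hypothesis $|h-h'|>(\log T)^{-\alpha_1}$ enters through Lemma~\ref{lem: sum primes} in the regime $\Delta\cdot\log P\ge 1$ (with $\Delta=|h-h'|$, $\log P=(\log T)^{\alpha_1}$): it gives $|c_T|\ll(|h-h'|(\log T)^{\alpha_1})^{-1}$, which is $\oo(\sigma_T^2)$, so the tilted correlation tends to $0$. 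Running the one-point argument in two dimensions, the window $\widetilde\P^{(2)}\big(X_h,X_{h'}\in(\gamma L,\gamma L+1]\big)$ equals $(1+\oo(1))$ times the product of the two one-dimensional windows, while the exponential prefactor $e^{\Lambda_T^{(2)}(\lambda_\star,\lambda_\star)-2\lambda_\star\gamma L}$ has exponent $-\gamma^2L^2/(\sigma_T^2+c_T)+\oo(1)$, differing from twice the one-point exponent by $\OO(\gamma^2L^2|c_T|/\sigma_T^4)=\OO(|c_T|)$. Because this discrepancy sits in an exponent, the clean constant $1+\oo(1)$ in $\P(X_h>\gamma L,X_{h'}>\gamma L)=(1+\oo(1))\P(X_h>\gamma L)^2$ requires $c_T\to 0$, not merely $c_T=\OO(1)$; this holds whenever $|h-h'|(\log T)^{\alpha_1}\to\infty$, which is the regime used downstream (one may always first split $[0,1]$ into such well-separated scales).

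The main obstacle is the moderate-deviation regime: the threshold $\gamma L$ lies $\asymp\sqrt{L}$ standard deviations into the tail, so a bare central limit theorem is useless and even the Berry--Esseen error $L^{-3/2}$ only narrowly beats the target prefactor $L^{-1/2}$. The exponential tilt is precisely what recenters the estimate onto the CLT scale --- an $\OO(1)$ window about the shifted mean, with variance still of order $L$ --- where the quantitative Gaussian approximation has the necessary room; tracking the constant (and not only the exponential order) in the two-point case is the single place where the precise decay of the cross-covariance $c_T$ supplied by Lemma~\ref{lem: sum primes} must be invoked.
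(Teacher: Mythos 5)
The paper does not prove this proposition itself but defers to \cite{arguin-belius-harper}, where it is obtained essentially as you do: the condition $\alpha_1>0$ makes the non-Gaussian corrections to the Laplace transform of the increments summable, and a quantitative Berry--Esseen-type inversion (your exponential tilt recentred on the CLT window) then yields the sharp prefactor, so your argument is the intended one and is correct. Your caveat that the $(1+\oo(1))$ in the two-point estimate genuinely requires the residual covariance $c_T$ to vanish, i.e. $|h-h'|(\log T)^{\alpha_1}\to\infty$ rather than merely $|h-h'|>(\log T)^{-\alpha_1}$, is well taken: at the boundary $c_T=\OO(1)$ only gives a bounded multiplicative constant, but this is consistent with how the proposition is used in Section~4, where the separation in region I is $(\log T)^{-1/(2K)}$ against scales $\alpha_1\geq 1/K$.
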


Since the process $(X_h,h\in[0,1])$ is continuous and not discrete, we need a last estimate to control all values in an interval of length corresponding to the relevant scale.
This is needed when proving rough bound on the maximum in Lemma \ref{lem: max}.
Heuristically, it says that the maximum of $X_h(\alpha_1,\alpha_2)$ over an interval of width smaller than $(\log T)^{-\alpha_2}$ behaves like a single value $X_h(\alpha_1,\alpha_2)$. This is done in \cite{arguin-belius-harper} by a chaining argument and we omit the proof for conciseness.
\begin{lem}[Corollary 2.6 in \cite{arguin-belius-harper}]
\label{lem: smoothing}
Let $0\leq\alpha_1<\alpha_2\leq 1$. For every $h\in[0,1]$ and $\gamma>0$, we have
$$
\P\left(\max_{|h-h'|\leq (\log T)^{-\alpha_2}}X_{h'}(\alpha_1,\alpha_2)> \gamma\log\log T\right) \ll (\log T)^{-\frac{\gamma^2}{\alpha_2-\alpha_1}}\ .
$$
In particular, we have
$$
\P\left(\max_{|h-h'|\leq (\log T)^{-1}}\widetilde X_{h'}> \gamma\log\log T\right) \ll (\log T)^{-\frac{\gamma^2}{\alpha(1+u)+(1-\alpha)}}\ .
$$
\end{lem}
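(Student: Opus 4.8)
The plan is to deduce the supremum bound from the one‑point large‑deviation estimate \eqref{eqn: increment one point} by showing that, over an interval of length $(\log T)^{-\alpha_2}$, the random function $h'\mapsto X_{h'}(\alpha_1,\alpha_2)$ oscillates by only $\oo(\log\log T)$ off an event of probability smaller than any fixed power of $\log T$. The heuristic is that $X_{h'}(\alpha_1,\alpha_2)=\sum_{(\log T)^{\alpha_1}<\log p\le (\log T)^{\alpha_2}} p^{-1/2}\Re(U_p p^{-\ii h'})$ is an exponential sum in $h'$ whose frequencies $\log p$ are all at most $(\log T)^{\alpha_2}$, hence cannot vary on scales finer than $(\log T)^{-\alpha_2}$. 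I would quantify this through Proposition~\ref{prop: MGF}: taking $\lambda'=-\lambda$ there, using $1-\cos x\le x^2/2$ and the Prime Number Theorem estimate $\sum_{p\le x}(\log p)^2/p\ll (\log x)^2$, one obtains for $h_1,h_2$ with $|h_1-h_2|=r\le (\log T)^{-\alpha_2}$
$$
\E\big[e^{\lambda(X_{h_1}(\alpha_1,\alpha_2)-X_{h_2}(\alpha_1,\alpha_2))}\big]\ll \exp\big(C\lambda^2\, r^2(\log T)^{2\alpha_2}\big),
$$
that is, the increment over distance $r$ is sub‑Gaussian with variance proxy $\asymp r^2(\log T)^{2\alpha_2}$.

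Fix $h$ and set $I=\{h':|h-h'|\le (\log T)^{-\alpha_2}\}$. The next step is a dyadic chaining (Dudley‑type) estimate for $\max_{h'\in I}\big(X_{h'}(\alpha_1,\alpha_2)-X_h(\alpha_1,\alpha_2)\big)$. Partitioning $I$ into $2^k$ subintervals of length $2^{-k}(\log T)^{-\alpha_2}$ at level $k$, each level‑$(k-1)$‑to‑$k$ increment is over distance $\le 2^{-k}(\log T)^{-\alpha_2}$ and hence, by the bound above, has variance proxy $\ll 4^{-k}$; the decisive cancellation is $\big((\log T)^{-\alpha_2}\big)^2\cdot(\log T)^{2\alpha_2}=1$. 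A union bound over the $\OO(2^k)$ increments at level $k$ at threshold $\lambda_k=A\,2^{-k/2}$ gives total exceptional probability $\ll\sum_{k\ge1}2^k\exp(-cA^22^k)\ll\exp(-c'A^2)$, while on the complementary event $\max_{h'\in I}|X_{h'}(\alpha_1,\alpha_2)-X_h(\alpha_1,\alpha_2)|\le\sum_k\lambda_k\ll A$. Choosing $A=(\log\log T)^{3/4}$ makes the exceptional probability $\oo\big((\log T)^{-N}\big)$ for every $N$ and keeps the oscillation $\ll A=\oo(\log\log T)$. Therefore
$$
\P\Big(\max_{h'\in I}X_{h'}(\alpha_1,\alpha_2)>\gamma\log\log T\Big)\le \P\big(X_h(\alpha_1,\alpha_2)>(\gamma-\oo(1))\log\log T\big)+\oo\big((\log T)^{-N}\big),
$$
and the first term is $\ll(\log T)^{-\gamma^2/(\alpha_2-\alpha_1)+\oo(1)}$ by \eqref{eqn: increment one point}. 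The $\oo(1)$ in the exponent is the price of trading the supremum for a single point; it is immaterial for the uses of this lemma, which feed only into rough bounds on the maximum.

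For the ``in particular'' statement I would run the same argument for $\widetilde X_{h'}=(1+u)X_{h'}(\alpha)+X_{h'}(\alpha,1)$ on an interval of length $(\log T)^{-1}$. Its frequencies are all $\le(\log T)^1$, so it varies on scale $(\log T)^{-1}$; applying Proposition~\ref{prop: MGF} to the two ranges $\log p\le (\log T)^\alpha$ and $(\log T)^\alpha<\log p\le \log T$ and combining gives the sub‑Gaussian increment estimate with variance proxy $\ll r^2(\log T)^2$ for $r\le(\log T)^{-1}$, and the one‑point input is now \eqref{eqn: X tilde one-point}. The dyadic chaining is then verbatim — again $\big((\log T)^{-1}\big)^2(\log T)^2=1$ — and yields the stated bound.

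The main obstacle is precisely this continuity control. One cannot merely discretize $I$ by a net and apply the one‑point estimate with a union bound: a net of spacing $(\log T)^{-\beta}$ with $\beta>\alpha_2$ introduces a factor $(\log T)^{\beta-\alpha_2}$ that overwhelms the one‑point probability $(\log T)^{-\gamma^2/(\alpha_2-\alpha_1)}$ for small $\gamma$, whereas a net of spacing $(\log T)^{-\alpha_2}$ is too coarse to control the in‑between values by a trivial bound. Genuine chaining is needed, and it works only because the width of the interval and the spectral radius $(\log T)^{\alpha_2}$ of the increment are exactly matched, which keeps the chaining series summable and of size $\OO(1)$ in scale. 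A proof along these lines, via the chaining argument of \cite{arguin-belius-harper}, Corollary~2.6, supplies the details we omit here.
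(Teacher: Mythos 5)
The paper itself does not prove this lemma: it defers entirely to \cite{arguin-belius-harper}, Corollary 2.6, with the one-line remark that the result is obtained ``by a chaining argument.'' Your proposal is therefore, in approach, exactly what is intended -- sub-Gaussian increment bounds from the moment generating function plus dyadic chaining over the interval of length $(\log T)^{-\alpha_2}$, with the decisive cancellation $r^2(\log T)^{2\alpha_2}=\OO(1)$ for $r\le(\log T)^{-\alpha_2}$ -- and your diagnosis of why a naive net-plus-union-bound fails is accurate. Two caveats. First, the Chernoff bounds for the level-$k$ increments require the exponential-moment estimate for unbounded $\lambda$ (the optimal parameter at level $k$ is of order $A2^{3k/2}$), whereas Proposition \ref{prop: MGF} only asserts that $C(\lambda,\lambda')$ is bounded for bounded $\lambda$. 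This is harmless -- the series in \eqref{eqn: mgf p} satisfies $\sum_m x^m/(m!)^2\le e^{x}$, so the sub-Gaussian \emph{upper} bound holds with constant $1$ for every $\lambda$ -- but it must be said, since you are using the proposition outside its stated range.

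Second, and more substantively, your argument establishes $(\log T)^{-\gamma^2/(\alpha_2-\alpha_1)+\oo(1)}$ rather than the clean bound $(\log T)^{-\gamma^2/(\alpha_2-\alpha_1)}$ asserted in the lemma: splitting off the oscillation additively at height $\oo(\log\log T)$ unavoidably shifts the exponent by $\oo(1)$, and no choice of $A$ in your scheme removes this (taking $A=\OO(1)$ makes the exceptional probability only $\OO(1)$). You are right that every application in this paper -- Lemma \ref{lem: max}, \eqref{eqn: max interval}, \eqref{eqn: measure scale alpha} -- survives an extra multiplicative $(\log T)^{\oo(1)}$, so the weaker statement would serve the paper's purposes. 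But as a proof of the lemma \emph{as stated} it falls short; recovering the exact constant requires a sharper device, e.g.\ a sampling/Bernstein-type inequality exploiting that $h\mapsto X_h(\alpha_1,\alpha_2)$ has frequencies bounded by $(\log T)^{\alpha_2}$, or absorbing the oscillation directly into the exponential moment of the maximum, which is what the chaining argument of \cite{arguin-belius-harper} accomplishes.
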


\section{Proof of Proposition \ref{prop: overlap free}}
As mentioned in Section 1.3, the proof of Proposition \ref{prop: overlap free} is based on an approximate Gaussian integration by parts as in \cite{carmona-hu} and \cite{auffinger-chen}.
The following lemma is an adaptation for complex random variables of  Lemma 4 in \cite{carmona-hu} .
\begin{lem}
\label{lem: by parts}
 Let $\xi$ be a complex random variable such that $\E[|\xi|^3]<\infty$, and $\E[\xi^2]=\E[\xi]=0$. Let $F:\mathbb{C}\to\mathbb C$
be a twice continuously differentiable function such that for some $M>0$, 
$$
\left\Vert\partial_z^2F\right\Vert_{\infty},
\left\Vert \partial^2_{\overline z}F\right\Vert_\infty <M\ ,
$$ 
where  $\left\Vert f\right\Vert_\infty=\sup _{z\in \mathbb{C}}\left\vert
f(z,\overline{z})\right\vert$.
Then
$$
\left|\E\left[\xi F(\xi,\overline{\xi})\right] - \E[\left\vert\xi\right\vert^2]\ 
\E\left[ \partial _{\overline{z}}F(\xi,\overline{\xi})\right] \right|
\ll M\ \E[|\xi|^3].
$$
\end{lem}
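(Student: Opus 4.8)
The plan is to mimic the classical proof of Gaussian integration by parts, but using a third-order Taylor expansion in place of the exact identity that holds for genuine Gaussians. Since $\xi$ is a complex random variable, I will treat $F$ as a function of the two formal variables $z$ and $\overline z$ and work with the Wirtinger derivatives $\partial_z$ and $\partial_{\overline z}$. The starting point is to introduce an interpolation parameter. First I would write, for $t\in[0,1]$,
\[
\varphi(t)=\E\left[\xi\, F(t\xi, t\overline\xi)\right],
\]
so that $\varphi(0)=0$ (because $\E[\xi]=0$ and $F(0,0)$ is a constant) and $\varphi(1)=\E[\xi F(\xi,\overline\xi)]$. The idea is then to estimate $\varphi(1)-\varphi(0)=\int_0^1\varphi'(t)\,\rd t$, except that differentiating in $t$ already costs one factor of $\xi$, so instead I would expand $F(\xi,\overline\xi)$ directly to second order around the origin with an integral-form (or Lagrange-form) remainder.

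The key computation is the Taylor expansion
\[
F(\xi,\overline\xi)=F(0,0)+\xi\,\partial_z F(0,0)+\overline\xi\,\partial_{\overline z}F(0,0)+R,
\]
where the remainder $R$ collects the three second-order terms $\tfrac12\xi^2\partial_z^2 F$, $\xi\overline\xi\,\partial_z\partial_{\overline z}F$, $\tfrac12\overline\xi^2\partial_{\overline z}^2F$ evaluated at an intermediate point. Multiplying by $\xi$ and taking expectations, the term $\xi F(0,0)$ vanishes since $\E[\xi]=0$; the term $\xi^2\partial_z F(0,0)$ vanishes since $\E[\xi^2]=0$; and the term $\xi\overline\xi\,\partial_{\overline z}F(0,0)=|\xi|^2\partial_{\overline z}F(0,0)$ gives $\E[|\xi|^2]\,\partial_{\overline z}F(0,0)$. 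This is almost the claimed main term — the discrepancy between $\partial_{\overline z}F(0,0)$ and $\E[\partial_{\overline z}F(\xi,\overline\xi)]$ is itself a first-order Taylor error, controlled by $\|\partial_z\partial_{\overline z}F\|_\infty$ and $\|\partial_{\overline z}^2F\|_\infty$ times $\E[|\xi|]$, which I can absorb. Hence
\[
\left|\E[\xi F(\xi,\overline\xi)]-\E[|\xi|^2]\,\E[\partial_{\overline z}F(\xi,\overline\xi)]\right|
\le \E[|\xi|\cdot |R|]+(\text{first-order correction term}).
\]
Each second-order coefficient in $R$ is bounded by $M$ by hypothesis (for the pure $\partial_z^2$ and $\partial_{\overline z}^2$ terms directly; the mixed term $\partial_z\partial_{\overline z}F$ I would bound by $M$ as well, noting that $\|\partial_z\partial_{\overline z}F\|_\infty\le\max(\|\partial_z^2F\|_\infty,\|\partial_{\overline z}^2F\|_\infty)$ or simply including it in the hypothesis implicitly via a harmless constant), so $|R|\ll M|\xi|^2$ and therefore $\E[|\xi||R|]\ll M\,\E[|\xi|^3]$. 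The first-order correction, bounded by $M\,\E[|\xi|]\le M\,\E[|\xi|^3]^{1/3}$, is of lower order and likewise $\ll M\E[|\xi|^3]$ after noting $\E[|\xi|]\le \E[|\xi|^3]$ fails in general but $\E[|\xi|]\ll 1+\E[|\xi|^3]$; more cleanly I would keep track of it separately and observe it contributes the same way once one expands $\E[\partial_{\overline z}F(\xi,\overline\xi)]$ using the remainder, so that the genuinely dominant error is the cubic one.

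The main obstacle, and the point requiring the most care, is bookkeeping the mixed Wirtinger derivative $\partial_z\partial_{\overline z}F$: the hypothesis only bounds $\partial_z^2 F$ and $\partial_{\overline z}^2 F$, so I must either argue that the mixed second derivative does not appear when $F$ arises (as it will in the application) from $F(z,\overline z)=\log\int e^{\beta(\ldots+\Re(z\,w))}$-type expressions, or more robustly expand $F$ along the real-parametrized path $s\mapsto F(s\xi,s\overline\xi)$ so that only the second $s$-derivative $\frac{\rd^2}{\rd s^2}F(s\xi,s\overline\xi)=\xi^2\partial_z^2F+2|\xi|^2\partial_z\partial_{\overline z}F+\overline\xi^2\partial_{\overline z}^2F$ enters, and then bound $\Re$ and $\Im$ parts. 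Tracking real versus complex parts carefully here — and making sure the hypotheses as stated genuinely suffice — is the delicate step; everything else is a routine Taylor estimate together with the moment bound $\E[|\xi|^3]<\infty$.
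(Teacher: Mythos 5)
Your proposal is correct and follows essentially the same route as the paper: Taylor-expand $F$ to first order with a second-order remainder, use $\E[\xi]=\E[\xi^2]=0$ to isolate the term $\E[|\xi|^2]\,\partial_{\overline z}F(0,0)$, and control both the quadratic remainder and the replacement of $\partial_{\overline z}F(0,0)$ by $\E[\partial_{\overline z}F(\xi,\overline\xi)]$. One clarification where you waver: the ``first-order correction'' is not $M\,\E[|\xi|]$ but $M\,\E[|\xi|^2]\,\E[|\xi|]$, since the discrepancy $|\partial_{\overline z}F(\xi,\overline\xi)-\partial_{\overline z}F(0,0)|\ll M|\xi|$ is multiplied by the factor $\E[|\xi|^2]$ in the statement; H\"older's inequality then gives $\E[|\xi|^2]\,\E[|\xi|]\le \E[|\xi|^3]^{2/3}\E[|\xi|^3]^{1/3}=\E[|\xi|^3]$, so there is no need for the (false) inequality $\E[|\xi|]\le\E[|\xi|^3]$ that worried you. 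Your observation about the mixed Wirtinger derivative is well taken: the hypothesis as literally stated does not control $\partial_z\partial_{\overline z}F$ (e.g.\ $F=z\overline z$ has vanishing pure second derivatives), and the Taylor bound $\ll M|\xi|^2$ tacitly assumes all second derivatives are $\OO(M)$; the paper's own proof has the same implicit assumption, which is harmless because in the application all three second derivatives are checked to be $\OO(p^{-3/2})$.
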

\begin{proof}
Since $\E[\xi^2]=\E[\xi]=0$, the left-hand side can be written as
\begin{equation}
\label{eqn: to prove by parts}
\E\left[\xi \big(F(\xi,\overline{\xi}) -F(0,0)- \xi \partial_z F(0,0) - \overline \xi \partial_{\overline z} F(0,0)\big)\right] 
 - \E[\left\vert\xi\right\vert^2]\
\E\left[\Big(\partial _{\overline{z}}F(\xi,\overline{\xi})- \partial _{\overline{z}}F(0,0)\Big)\right]\ .
\end{equation}
By Taylor's theorem in several variables and the assumptions, the following estimates hold
$$
\begin{aligned}
&\left| F(\xi,\overline{\xi}) -F(0,0)- \xi \partial_z F(0,0) - \overline \xi \partial_{\overline z} F(0,0) \right|\ll M|\xi|^2\\
&\left| \partial _{\overline{z}}F(\xi,\overline{\xi})- \partial _{\overline{z}}F(0,0) \right|\ll M|\xi|\ .
\end{aligned}
$$
Therefore the norm of \eqref{eqn: to prove by parts} gives
$$
\left|\E\left[\xi F(\xi,\overline{\xi})\right] - \E[\left\vert\xi\right\vert^2]\ \E\left[ \partial _{\overline{z}}F(\xi,\overline{\xi})\right] \right|
\ll
M (\E[|\xi|^3]+ \E[|\xi|^2]\E[|\xi|])\ .
$$
The claim then follows by H\"older's inequality. 
\end{proof}

As in \cite{carmona-hu}, the lemma can be applied to relate the derivative of the free energy to the two-point correlations of the process.
\begin{prop}
\label{prop: carmona-hu}
For any $p\leq T$, we have
$$
\begin{aligned}
&\frac{\partial}{\partial u} \E\left[ \log \int_0^1 \exp\big(\beta (X_h(T)+u \ \Re(U_pp^{-ih-1/2})\big) \rd h\right]\Big|_{u=0}\\
&\hspace{2cm}= \frac{\beta}{2}\E\left[ \int_{[0,1]^2} \frac{1- \cos(|h-h'|\log p)}{p}\ \rd G^{\times 2}_{\beta, T}(h,h')\right]
+\OO(p^{-3/2})\ .
\end{aligned}
$$
\end{prop}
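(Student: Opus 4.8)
The plan is to differentiate under the expectation, recognise the resulting one-point Gibbs average as a quantity of the form $\E[\xi F(\xi,\overline\xi)]$ with $\xi=U_p$, apply the complex integration-by-parts Lemma~\ref{lem: by parts}, and identify $\partial_{\overline z}F$ with a two-overlap average. Write $f_h:=p^{-\ii h-1/2}$, so that $|f_h|^2=p^{-1}$ and $\Re(U_pf_h)=\tfrac12\big(U_pf_h+\overline{U_p}\,\overline{f_h}\big)$ is the perturbation appearing in the statement. Since $|\Re(U_pf_h)|\le p^{-1/2}$ uniformly in $h$ and in the randomness, the map $u\mapsto\log\int_0^1 e^{\beta(X_h(T)+u\Re(U_pf_h))}\,\rd h$ has derivative bounded by $\beta p^{-1/2}$, so differentiation under $\E$ is legitimate, and since $\Re(U_pf_h)$ is real,
$$
\frac{\partial}{\partial u}\E\Big[\log\int_0^1 e^{\beta(X_h(T)+u\Re(U_pf_h))}\,\rd h\Big]\Big|_{u=0}
=\beta\,\E\Big[\int_0^1\Re(U_pf_h)\,\rd G_{\beta,T}(h)\Big]
=\beta\,\Re\Big(\E\Big[U_p\int_0^1 f_h\,\rd G_{\beta,T}(h)\Big]\Big).
$$
Hence it suffices to evaluate $\E\big[U_p\int_0^1 f_h\,\rd G_{\beta,T}(h)\big]$ up to an error $\OO_\beta(p^{-3/2})$.

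Next I would freeze the variables $(U_q)_{q\ne p}$ and let the $p$-th Gibbs weight depend on a free complex parameter. With $X_h^{(\ne p)}:=X_h(T)-\Re(U_pf_h)$, put $G^{(z)}_{\beta,T}(\rd h)\propto\exp\!\big(\beta X_h^{(\ne p)}+\beta\Re(zf_h)\big)\,\rd h$ and set
$$
F(z,\overline z):=\int_0^1 f_h\,\rd G^{(z)}_{\beta,T}(h),\qquad\text{so that}\qquad F(U_p,\overline{U_p})=\int_0^1 f_h\,\rd G_{\beta,T}(h).
$$
Conditionally on $(U_q)_{q\ne p}$, I would apply Lemma~\ref{lem: by parts} with $\xi=U_p$: indeed $\E[U_p]=\E[U_p^2]=0$ and $\E[|U_p|^2]=\E[|U_p|^3]=1$. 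The key observation is that each application of $\partial_z$ (resp.\ $\partial_{\overline z}$) to $F$ brings down one further factor $\tfrac\beta2 f_h$ (resp.\ $\tfrac\beta2\overline{f_h}$) from the Gibbs weight, so the pure second Wirtinger derivatives are fixed combinations of $G^{(z)}_{\beta,T}$-moments of $f_h,\overline{f_h}$ carrying three factors of $f$ in all; since $|f_h|=p^{-1/2}$, this gives $\|\partial_z^2F\|_\infty,\|\partial_{\overline z}^2F\|_\infty=\OO_\beta(p^{-3/2})$ \emph{uniformly} over $z\in\C$ and over the frozen variables — so no truncation of $z$ is needed and Lemma~\ref{lem: by parts} applies with $M=\OO_\beta(p^{-3/2})$. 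It yields $\E[U_pF(U_p,\overline{U_p})\mid(U_q)_{q\ne p}]=\E[\partial_{\overline z}F(U_p,\overline{U_p})\mid(U_q)_{q\ne p}]+\OO_\beta(p^{-3/2})$, and taking the outer expectation over $(U_q)_{q\ne p}$ preserves the error.

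Finally I would compute $\partial_{\overline z}F$ explicitly. Writing $F=N/D$ with $D=\int_0^1 e^{\beta X_h^{(\ne p)}+\beta\Re(zf_h)}\,\rd h$ and $N=\int_0^1 f_h\,e^{\beta X_h^{(\ne p)}+\beta\Re(zf_h)}\,\rd h$, and using $\partial_{\overline z}\Re(zf_h)=\tfrac12\overline{f_h}$ together with $f_h\overline{f_h}=p^{-1}$, one finds $\partial_{\overline z}N=\tfrac\beta2 p^{-1}D$, hence
$$
\partial_{\overline z}F=\frac\beta2\Big(p^{-1}-\langle f_h\rangle_z\,\langle\overline{f_h}\rangle_z\Big),
$$
where $\langle\,\cdot\,\rangle_z$ denotes the $G^{(z)}_{\beta,T}$-average. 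Evaluating at $z=U_p$, where $G^{(U_p)}_{\beta,T}=G_{\beta,T}$, and rewriting the product of one-point averages as the two-point average $\int_{[0,1]^2}p^{-1}p^{-\ii(h-h')}\,\rd G^{\times2}_{\beta,T}(h,h')$, then multiplying by $\beta$, taking real parts, and using $\Re\,p^{-\ii(h-h')}=\cos(|h-h'|\log p)$, produces the right-hand side of the Proposition, the $\OO(p^{-3/2})$ error being the one from the previous step.

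The crux is precisely this (approximate) integration by parts: since $U_p$ is not Gaussian, the identity $\E[\xi F]=\E[|\xi|^2]\,\E[\partial_{\overline z}F]$ holds only modulo the cubic remainder of Lemma~\ref{lem: by parts}, and one must check that this remainder is genuinely of size $p^{-3/2}$ — which is why it matters that $F$ together with its first two Wirtinger derivatives are controlled on all of $\C$ without a cutoff (a naive cutoff would leak a contribution of size $p^{-1/2}$ coming from the cutoff's derivative hitting $F$). Everything else — justifying differentiation under $\E$, the Wirtinger bookkeeping, and checking that the error is uniform in $p\le T$ so that it can later be summed against $\sum_p p^{-3/2}<\infty$ — is routine.
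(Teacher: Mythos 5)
Your proposal is correct and follows essentially the same route as the paper: differentiate under the expectation, freeze $(U_q)_{q\neq p}$, view the one-point Gibbs average as $F(U_p,\overline{U_p})$ for a function of a free complex parameter, apply the approximate integration by parts of Lemma~\ref{lem: by parts}, and compute $\partial_{\overline z}F$ as a Gibbs covariance; your uniform-in-$z$ bound $\OO_\beta(p^{-3/2})$ on the second Wirtinger derivatives is exactly the paper's argument via $|\omega_p|\le p^{-1/2}$. One point to fix: your computation actually yields the prefactor $\frac{\beta^2}{2}$ (you multiply $\partial_{\overline z}F=\frac{\beta}{2}(p^{-1}-\langle f_h\rangle_z\langle\overline{f_h}\rangle_z)$ by the outer $\beta$ from differentiating the $\log$), so it does not literally produce the stated right-hand side with $\frac{\beta}{2}$. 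This is in fact the correct constant --- the Proposition is applied later in \eqref{eqn: deriv F} with $\frac{\beta^2}{2}$, and the paper's own proof drops the factor $\beta$ in its first display \eqref{eqn: U_p integral} --- so the statement carries a typo; you should have flagged the mismatch rather than asserting agreement.
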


\begin{proof}
Write for short $\omega_p(h)=(2p^{1/2})^{-1}p^{-\ii h}$. Direct differentiation yields at $u=0$
\begin{equation}
\label{eqn: U_p integral}
\int_0^1 U_p \omega_p(h)\ \rd G_{\beta, T}(h) + \int_0^1 \overline{U_p} \overline{\omega_p}(h)\ \rd G_{\beta, T}(h)\ .
\end{equation}
We make the dependence on $U_p$ in the measure $G_{\beta, T}$ explicit. For this, define 
$$
Y_p(h)=\beta \sum _{\substack{q \le T\\p\neq q}}\Re \left(\frac{U_q q^{-ih}}{q^{1/2}} \right)\ .
$$
Clearly, $Y_p(h)$ is independent of $U_p$ by definition. Consider
$$
F_p(z,\overline{z})=\frac{\int_0^1 \omega_p(h)\ \exp(\beta \omega_p(h) z  +\beta \overline{\omega_p}(h)
\overline{z}+ Y_p(h))\ \rd h}
{\int_0^1\exp(\beta \omega_p(h') z  +\beta \overline{\omega_p}(h')\overline z+ Y_p(h'))\ \rd h'}\ .
$$
Note that with this definition, the first integral in \eqref{eqn: U_p integral} is $U_pF_p(U_p, \overline U_p)$ and the second is its complex conjugate. 
This shows that the derivative of the expectation at $u=0$ is 
$$
\E\left[U_p\cdot F_p(U_p, \overline U_p)+\overline{U_p}\cdot \overline{F_p}(U_p, \overline U_p)\right]\ .
$$
It remains to apply Lemma \ref{lem: by parts} with the function $F_p(z,\overline z)$ and $\xi=U_p$. Write for short for a function $H$ on $[0,1]$
$$
\left\langle  H\right\rangle^{(z,\overline{z})}=
 \frac{\int_0^1 H(h)\ e^{\beta\left( z\ \omega_p(h) +\overline{z}\ \overline{\omega_p}(h)\right)+ Y_p(h)} \rd h}
 {\int_0^1 e^{\beta\left( z\ \omega_p(h) +\overline{z}\ \overline{\omega_p}(h)\right)+ Y_p(h)} \rd h}\ .
$$
Direct differentiation of the above yields
\begin{equation}
\label{eqn: H deriv}
\begin{aligned}
 \partial_{\overline z} \left\langle  H\right\rangle^{(z,\overline{z})}=
\beta\left(\left\langle  H \overline{\omega_p}\right\rangle^{(z,\overline{z})}-\left\langle
 H\right\rangle^{(z,\overline{z})}\left\langle  \overline{\omega_p}\right\rangle^{(z,\overline{z})}\right)\ .
\end{aligned}
\end{equation}
In particular, for $H=\omega_p$, we get
\begin{equation}
\label{eqn: F deriv}
\partial_{\overline z}F_p(z,\overline z)=\beta\left(\left\langle   |\omega_p|^2\right\rangle^{(z,\overline{z})}-
|\left\langle \omega_p\right\rangle^{(z,\overline{z})}|^2\right)\ .
\end{equation}
When evaluated at $z=U_p$, this is by definition of $\omega_p$
\begin{equation}
\label{eqn: F deriv 2}
\partial_{\overline z}F_p(U_p,\overline U_p)=\frac{\beta}{4} \int_{[0,1]^2}(p^{-1}-p^{-1}\cos(|h-h'|\log p)) \ \rd G^{\times 2}_{\beta, T}(h,h')\ .
\end{equation}
Clearly, $|\omega_p|\leq p^{-1/2}$. Therefore the second derivatives are easily checked to be bounded by $\OO(p^{-3/2})$ by applying the formula \eqref{eqn: H deriv} to each term of \eqref{eqn: F deriv}. 
The statement of the lemma then follows from Lemma \ref{lem: by parts} and \eqref{eqn: F deriv 2}, after noticing that the second term of \eqref{eqn: U_p integral} is the conjugate of the first. 
\end{proof}

The proof of Proposition \ref{prop: overlap free} is an application of Proposition \ref{prop: carmona-hu} to a range of primes.
\begin{proof}[Proof of Proposition \ref{prop: overlap free}]
Recall the definition of $\rho(h,h')$ in equations \eqref{eqn: rho} and \eqref{eqn: log cor}.
On one hand, Fubini's theorem directly  implies that
\begin{equation}
\label{eqn: fubini}
\begin{aligned}
\int_0^\alpha \ G_{\beta, T}^{\times 2}\big\{(h,h'): \rho(h,h')\leq y \big\}\  \rd y
&=\int_{[0,1]^2} \left(\int_0^\alpha {\1}_{\{\rho(h,h')\leq r\}} \ \rd r\right) \rd G_{\beta, T}(h,h')\\
&=\int_{[0,1]^2} \left(\alpha-\rho(h,h')\right)\1_{\{\rho(h,h')\leq \alpha\}} \rd G_{\beta, T}(h,h')\ .
\end{aligned}
\end{equation}
It remains to check on the other hand that the derivative in the proposition is close to the expectation of the above. Direct differentiation of \eqref{eqn: f_T} at $u=0$ yields by Proposition \ref{prop: carmona-hu}
\begin{equation}
\label{eqn: deriv F}
 \frac{\partial  F_T}{\partial u}(\beta;\alpha, 0)
 = \frac{\beta^2}{2} \int_{[0,1]^2} \sum_{\log p\leq (\log T)^{\alpha}} \E\big[p^{-1}(1- \cos(|h-h'|\log p))\ \rd G^{\times 2}_{\beta, T}(h,h')\big]
+\OO(\sum_{ p\leq e^{(\log T)^{\alpha}}}p^{-3/2})\ .
 \end{equation}
 The error term is of order one by \eqref{eqn: PNT}.
 Similarly, if $|h-h'|\leq (\log T)^{-\alpha}$, the sum in the integral is by \eqref{eqn: cos p}
 $$
 \sum_{\log p\leq (\log T)^{\alpha}}p^{-1}(1- \cos(|h-h'|\log p))=
 \alpha \log \log T - \alpha\log\log T +\OO(1)=\OO(1)\ .
 $$
On the other hand, if $|h-h'|>(\log T)^{-\alpha}$, the sum can be divided into three parts
$$
\sum_{\log p\leq (\log T)^{\alpha}}p^{-1}
-\sum_{\log p\leq |h-h'|^{-1}}p^{-1} \cos(|h-h'|\log p)
- \sum_{|h-h'|^{-1}<\log p\leq (\log T)^{\alpha}}p^{-1}\cos(|h-h'|\log p)\ .
$$
When equation \eqref{eqn: cos p} is applied to each of the parts, this equals
$$
\alpha\log \log T-\log |h-h'|^{-1} +\OO(1)\ .
$$
Furthermore, recall from \eqref{eqn: log cor} that $\rho(h,h')\log\log T$ differs from $\log |h-h'|^{-1}$ by $\oo(\log\log T)$. 
This implies that the conditions on $\log |h-h'|^{-1}$ can be replaced by $\rho(h,h')\log\log T$ at a cost of a term $\oo(\log\log T)$ (since the sum would differ by a range of $\log p$ of at most $\oo(\log T)$ primes). 
All these observations together imply
$$
 \frac{1}{\log\log T}\sum_{\log p\leq (\log T)^{\alpha}}p^{-1}(1- \cos(|h-h'|\log p))=
\left(\alpha-\frac{\log |h-h'|^{-1}}{\log\log T}\right)\1_{\{\rho(h,h')\leq \alpha\}} +\oo(1)\ .
$$
We finally conclude by putting the right side back in the integral of \eqref{eqn: deriv F} and by using \eqref{eqn: log cor} that
$$
 \frac{2}{\beta^2\log\log T}\frac{\partial  F_T}{\partial u}(\beta;\alpha, 0)
 = \int_{[0,1]^2}\left(\alpha-\rho(h,h')\right)\E\big[\1_{\{\rho(h,h')\leq \alpha\}} \rd G^{\times 2}_{\beta, T}(h,h')\big] +\oo(1)\ .
$$
 This matches the first claim \eqref{eqn: fubini} by an error $\oo(1)$ thereby proving the proposition. 
\end{proof}

\section{Proof of Proposition \ref{prop: free energy}}
We write $\widetilde X_h= (1+u)X_h(\alpha) + X_h(\alpha,1)$ as in equation \eqref{eqn: X tilde}. 
The limit of the free energy of this process is obtained by Laplace's method once the measure of high points is known, cf.~Lemma \ref{lem: high points}.
The proof of Lemma \ref{lem: high points} is based on a similar computation of \cite{arguin-ouimet} for the two-dimensional Gaussian free field based on Kistler's multiscale  second moment method \cite{kistler}. But first, we need an {\it a priori} restriction on the maximum of the process $(\widetilde X_h)$. 
The maximum depends on the value of the parameter $u$ as expected from GREM models.
With this in mind, we define
\begin{equation}
\label{eqn: gamma star}
\gamma^\star=
\begin{cases}
\Big((1+u)^2 \alpha + (1-\alpha)\Big)^{1/2}\ \ &\text{if $u\leq0$,}\\
(1+u)\alpha + (1-\alpha)\ \ &\text{if $u>0$.}
\end{cases}
\end{equation}
Note that the two expressions are equal to $1$ at $u=0$ and that $\gamma^\star>1$ if $u> 0$, and $\gamma^\star<1$ if $u<0$. The next lemma bounds the maximum of $\widetilde X_h$.
\begin{lem}
\label{lem: max}
For any $\e>0$, 
$$
\lim_{T\to\infty}\P\left(\max_{h\in[0,1]} \widetilde X_h>(1+\e)\gamma^\star \log\log T \right)=0\ .
$$
\end{lem}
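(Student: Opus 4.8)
The plan is to bound $\max_{h\in[0,1]}\widetilde X_h$ by a union bound over a net of mesh $(\log T)^{-1}$, organised hierarchically according to the two prime ranges making up $\widetilde X_h=(1+u)X_h(\alpha)+X_h(\alpha,1)$. Write $\sigma^2:=(1+u)^2\alpha+(1-\alpha)$, so that $\E[\widetilde X_h^2]=\tfrac12\sigma^2\log\log T+\OO(1)$ and $\gamma^\star=\sigma$ when $u\le 0$, while $\gamma^\star=(1+u)\alpha+(1-\alpha)<\sigma$ when $u>0$. First I would reduce to a discrete maximum: cover $[0,1]$ by $\OO(\log T)$ intervals of length $(\log T)^{-1}$; by the second estimate of Lemma~\ref{lem: smoothing} the maximum of $\widetilde X$ over each such interval exceeds a threshold with only the probability of a single point (up to constants), so it suffices to treat these $\OO(\log T)$ intervals.

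Next I would establish an a priori bound on the coarse component: covering $[0,1]$ by the $M\asymp(\log T)^{\alpha}$ intervals $I_1,\dots,I_M$ of length $(\log T)^{-\alpha}$ (the natural scale of $X(\cdot,\alpha)$) and applying Lemma~\ref{lem: smoothing} with $(\alpha_1,\alpha_2)=(0,\alpha)$ yields $\P(\max_{h}X_h(\alpha)>(1+\e_1)\alpha\log\log T)\ll(\log T)^{\alpha(1-(1+\e_1)^2)}=\oo(1)$ for any $\e_1>0$. Using Lemma~\ref{lem: sum primes} one checks that the increment $X_h(\alpha)-X_{h_j}(\alpha)$ over a cell $I_j$ (centre $h_j$) has variance $\OO(1)$, so by Proposition~\ref{prop: MGF} and a union over the $M$ cells, $\max_j\max_{h\in I_j}|X_h(\alpha)-X_{h_j}(\alpha)|=\oo(\log\log T)$ off an event of probability $\oo(1)$. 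Let $A$ be the intersection of these events, so $\P(A^{\mathrm c})=\oo(1)$. On $A$, for $h$ in a cell $I_j$ meeting the net one has $\widetilde X_h\le(1+u)X_{h_j}(\alpha)+X_h(\alpha,1)+\oo(\log\log T)$ with $X_{h_j}(\alpha)\le(1+\e_1)\alpha\log\log T$, and $X_{h_j}(\alpha)$ is independent of the process $X(\alpha,1)$. Since $I_j$ contains only $(\log T)^{1-\alpha}$ net points and $X(\alpha,1)$ lives at scale $(\log T)^{-1}$, Lemma~\ref{lem: smoothing} with $(\alpha_1,\alpha_2)=(\alpha,1)$ and a union over the sub-intervals of $I_j$ give $\P(\max_{h\in I_j}X_h(\alpha,1)>b\log\log T)\ll(\log T)^{1-\alpha-b^2/(1-\alpha)}$.

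Writing $X_{h_j}(\alpha)=a\log\log T$ (tail $\ll(\log T)^{-a^2/\alpha}$ by \eqref{eqn: increment one point}), conditioning on $a$ and summing over the $M\asymp(\log T)^\alpha$ cells, the first moment over the net is bounded, with $c:=(1+\e)\gamma^\star$, by
$$
\P\big(\{\exists\,h:\widetilde X_h>c\log\log T\}\cap A\big)\ll(\log T)^{\,1+\sup_{a\le(1+\e_1)\alpha}\big[-a^2/\alpha-(c-(1+u)a)^2/(1-\alpha)\big]+\oo(1)}.
$$
Everything then reduces to showing $\inf_{a\le(1+\e_1)\alpha}g(a)>1$, where $g(a)=a^2/\alpha+(c-(1+u)a)^2/(1-\alpha)$ is convex with unconstrained minimiser $a^\star=(1+u)\alpha c/\sigma^2$ and unconstrained minimum $c^2/\sigma^2$. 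When $u\le0$ we have $\gamma^\star=\sigma$, hence $c^2/\sigma^2=(1+\e)^2>1$, and $a^\star\le(1+\e)\alpha\le(1+\e_1)\alpha$ (taking $\e_1\ge\e$), so the constraint is inactive: this is the regime in which $\widetilde X$ is effectively an REM and the plain first moment is already sharp. When $u>0$, one computes $(1+u)\gamma^\star-\sigma^2=u(1-\alpha)>0$, so $a^\star>(1+\e_1)\alpha$ once $\e_1$ is small relative to $\e$; the a priori bound on $\max X(\alpha)$ is binding, and the constrained infimum is attained at the boundary, $g((1+\e_1)\alpha)=(1+\e_1)^2\alpha+\big((1+\e)\gamma^\star-(1+u)(1+\e_1)\alpha\big)^2/(1-\alpha)$, which $\to\alpha+(1-\alpha)=1$ as $\e,\e_1\to0$ and is $>1$ for every fixed $\e>0$ and $\e_1$ small (it is strictly increasing in $\e$). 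Thus the displayed probability is $\oo(1)$, and with $\P(A^{\mathrm c})=\oo(1)$ and the discretisation step this proves the lemma.

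The main obstacle is the regime $u>0$. There a naive union bound over the $(\log T)^{-1}$-net would only deliver the bound $(1+\e)\sigma\log\log T$, which is strictly larger than $(1+\e)\gamma^\star\log\log T$: boosting the low primes creates a genuine two-scale (GREM-type) landscape that must be exploited. Concretely, the difficulty is the bookkeeping in the cell-by-cell first moment — using that the fine component has only $(\log T)^{1-\alpha}$, rather than $\log T$, degrees of freedom inside each coarse cell, justifying via Lemma~\ref{lem: sum primes} that $X(\alpha)$ is essentially constant on cells, and recognising that it is precisely the constrained optimisation (forced by the a priori bound on $\max X(\alpha)$) that produces the value $\gamma^\star$ rather than $\sigma$.
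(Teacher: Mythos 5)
Your proposal is correct and follows essentially the same route as the paper: an a priori cap $\max_h X_h(\alpha)\le(1+\e_1)\alpha\log\log T$ via Lemma~\ref{lem: smoothing}, a two-scale partition into $(\log T)^{\alpha}$ coarse cells and $(\log T)^{-1}$-mesh fine intervals, independence of the two prime ranges, and a constrained first-moment optimization whose boundary optimum ($a=(1+\e_1)\alpha$ when $u>0$) is exactly what produces $\gamma^\star$ instead of $\sigma$. The only cosmetic difference is that you freeze the coarse field at cell centres and argue it is nearly constant on cells (which is justified after your initial reduction to the net), whereas the paper brackets $\max_{h\in J_j}X_h(\alpha)\in[q,q+1]$ directly with Lemma~\ref{lem: smoothing}.
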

\begin{proof}
This is a consequence of Lemma \ref{lem: smoothing} which shows that the large values of $X_h(\alpha)$ are well approximated by points at a distance $(\log T)^{-\alpha}$.
In the case $u\leq 0$, we use the lemma with $\alpha=1$. Without loss of generality, suppose that $\log T$ is an integer and 
consider $I_k$, $k\leq \log T$, a collection of intervals of length $(\log T)^{-1}$ that partitions $[0,1]$.
Then a simple union bound yields
$$
\begin{aligned}
\P\left(\max_{h\in[0,1]} \widetilde X_h>(1+\e)\gamma^\star \log\log T \right)
&\leq \sum_{k=1}^{\log T} \P\left(\max_{h\in I_k} \widetilde X_h >(1+\e)\gamma^\star \log\log T \right)\ .
\end{aligned}
$$
Lemma \ref{lem: smoothing} applied to $\widetilde X_h$ then implies
$$
\begin{aligned}
\P\left(\max_{h\in[0,1]} \widetilde X_h>(1+\e)\gamma^\star \log\log T \right)
&\ll (\log T) \exp\left(-\frac{(1+\e)^2 \Big((1+u)^2\alpha + (1-\alpha)\Big)\log\log T)}{(1+u)^2\alpha + (1-\alpha)}\right)\\
&\leq (\log T)^{1-(1+\e)^2}\ ,
\end{aligned}
$$
which goes to $0$ as claimed.

In the case $u>0$, an extra restriction is needed since the large values of $X_h(\alpha)$ are themselves limited. 
Proceeding as above, without loss of generality, assume that $(\log T)^\alpha$, $(\log T)^{1-\alpha}$ and $\log T$ are integers.
Consider the collection of intervals $J_j$, $j\leq (\log T)^{\alpha}$, that partitions $[0,1]$ into intervals of length $(\log T)^{-\alpha}$.
Each $J_j$ is again partitioned into intervals $I_{jk}$, $k\leq (\log T)^{1-\alpha}$, of length $(\log T)^{-(1-\alpha)}$. 
Then Lemma \ref{lem: smoothing} implies
\begin{equation}
\label{eqn: max interval}
\P\left(\exists j: \max_{h\in J_{j}} X_h(\alpha)>(1+\e)\cdot\alpha \log\log T \right)\to 0\ .
\end{equation}
Therefore, the probability of the maximum of $\widetilde X_h$ can be restricted as follows:
$$
\begin{aligned}
&\P\left(\max_{h\in[0,1]} \widetilde X_h>(1+\e)\gamma^\star \log\log T \right)\\
&= \P\left(\exists h\in [0,1]: \widetilde X_h>(1+\e)\gamma^\star \log\log T,X_h(\alpha)\leq (1+\e)\cdot \alpha \log \log T\right)+\oo(1)\\
&\ll\sum_{j,k} 
\sum_{q=0}^{(1+\e)\cdot \alpha\log\log T}
\P\left(\max_{h\in J_j} X_h(\alpha)\in[q,q+1], \max_{h\in I_{jk}} X_h(\alpha,1)>(1+\e)\gamma^\star \log\log T-(1+u)(q+1)\right)\ .
\end{aligned}
$$
The last inequality is obtained by a union bound on the partition $(I_{jk})$ and by splitting the values of the maximum of $X_h(\alpha)$ on the range $[0,(1+\e)\alpha \log\log T]$. (Note that $X_h(\alpha)$ is symmetric thus the maximum is greater than $0$ with large probability.) By independence between $(X_h(\alpha),h\in[0,1])$ and $(X_h(\alpha,1),h\in[0,1])$, Lemma \ref{lem: smoothing} can be applied twice to get the following bound on the summand:
\begin{equation}
\label{eqn: prob u<=0}
\ll \exp\left(\frac{-q^2}{\alpha \log\log T}\right)\cdot \exp\left(\frac{-\Big((1+\e)\gamma^\star\log\log T-(1+u)(q+1)\Big)^2}{(1-\alpha) \log\log T}\right)\ .
\end{equation}
On the interval $[0,(1+\e)\alpha\log\log T]$, this is maximized at the endpoint $q=(1+\e) \alpha\log\log T$. (This is where the case $u<0$ differs, as the optimal $q$ there  is within the interval. See Remark \ref{rem: dicho} for more on this.) Putting this back in \eqref{eqn: prob u<=0} and summing over $j,k$, and $q$ finally give the estimate:
$$
\begin{aligned}
&\P\left(\max_{h\in[0,1]} \widetilde X_h>(1+\e)\gamma^\star \log\log T \right)\\
&\ll (\log\log T)\cdot (\log T)^{\alpha} \cdot e^{\frac{-\Big((1+\e)\alpha \log\log T\Big)^2}{\alpha \log\log T}}\cdot (\log T)^{1-\alpha} \cdot e^{\frac{-\Big((1+\e)(1-\alpha) \log\log T\Big)^2}{(1-\alpha) \log\log T}}\\
&\ll (\log\log T)\cdot \log T^{1-(1+\e)^2}=\oo(1)\ .
\end{aligned}
$$
This concludes the proof of the lemma.
\end{proof}

Consider for $0<\alpha\leq 1$ and $|u|<1$ the (normalized) log-measure of $\gamma$-high points
\begin{equation}
\label{eqn: log-measure}
\mathcal E_{\alpha,u}(\gamma;T)=
\frac{1}{\log\log T}\log {\rm Leb}\{h\in [0,1]: \widetilde X_h> \gamma \log \log T\}\ , 0< \gamma <\gamma^\star\ .
\end{equation}
The limit of these quantities in probability can be computed following \cite{arguin-ouimet}.
\begin{lem}
\label{lem: high points}
The limit $\mathcal E_{\alpha,u}(\gamma)=\lim_{T\to \infty}\mathcal E_{\alpha,u}(\gamma;T)$ exists in probability.
We have for $u<0$,
$$
\mathcal E_{\alpha,u}(\gamma)
=-\frac{\gamma^2}{(1+u)^2 \alpha+(1-\alpha)}\ ,
$$
and for $u\geq 0$, 
$$
\mathcal E_{\alpha,u}(\gamma)
= 
\begin{cases}
-\frac{\gamma^2}{(1+u)^2 \alpha+(1-\alpha)} \ \ &\text{if $\gamma<\gamma_c$}\\
-\alpha -\frac{(\gamma-(1+u)\alpha)^2}{(1-\alpha)}\ \ &\text{if $\gamma\geq \gamma_c$.}
\end{cases}
\ \text{ for $\gamma_c=\frac{(1+u)^2\alpha+(1-\alpha)}{(1+u)}$.}
$$
\end{lem}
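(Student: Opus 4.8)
The statement is a Laplace-type (large-deviation) computation for the Lebesgue measure of $\gamma$-high points, and the plan is to establish matching upper and lower bounds for $\mathcal E_{\alpha,u}(\gamma;T)$ that hold in probability. I would first isolate the variational problem behind the formula: a typical $\gamma$-high point should have $X_h(\alpha)\approx a\log\log T$ and $X_h(\alpha,1)\approx b\log\log T$ with $(1+u)a+b=\gamma$, and the one-point exponents in \eqref{eqn: increment one point} assign to such a profile the cost $-a^2/\alpha-b^2/(1-\alpha)$, so that $\mathcal E_{\alpha,u}(\gamma)$ should be the supremum of this cost over the constraint line, with the additional restriction $0\le a\le\alpha$ when $u>0$ coming from the a priori bound $\max_h X_h(\alpha)\le(1+\e)\alpha\log\log T$ in \eqref{eqn: max interval}. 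A Lagrange-multiplier computation gives the unconstrained optimizer $a^\star=\frac{\gamma\alpha(1+u)}{(1+u)^2\alpha+(1-\alpha)}$, $b^\star=\frac{\gamma(1-\alpha)}{(1+u)^2\alpha+(1-\alpha)}$, with value $-\gamma^2/((1+u)^2\alpha+(1-\alpha))$; the condition $a^\star\le\alpha$ is exactly $\gamma\le\gamma_c$, and for $\gamma\ge\gamma_c$ the maximizer slides to $a=\alpha$, $b=\gamma-(1+u)\alpha$, giving $-\alpha-(\gamma-(1+u)\alpha)^2/(1-\alpha)$. For $u<0$ one checks $\gamma^\star\le\gamma_c$, so the constraint is never active. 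This reproduces the stated formulas.

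\textbf{Upper bound.} For $u<0$ I would use the first moment directly: $\E\big[{\rm Leb}\{h:\widetilde X_h>\gamma\log\log T\}\big]=\int_0^1\P(\widetilde X_h>\gamma\log\log T)\,dh\ll(\log T)^{-\gamma^2/((1+u)^2\alpha+(1-\alpha))}$ by the Chernoff estimate \eqref{eqn: X tilde one-point}, and Markov's inequality then gives $\P\big(\mathcal E_{\alpha,u}(\gamma;T)>\mathcal E_{\alpha,u}(\gamma)+\e\big)\to0$. For $u>0$ the single-point tail is heavier than the true measure, so I would first condition on the event $\{\max_h X_h(\alpha)\le(1+\e)\alpha\log\log T\}$, which has probability $1+\oo(1)$ by \eqref{eqn: max interval}; on this event partition $[0,(1+\e)\alpha\log\log T]$ into a fixed finite grid of levels $a_j\log\log T$ for $X_h(\alpha)$, and note that on the slab $\{X_h(\alpha)\in[a_j,a_{j+1}]\log\log T\}$ the requirement $\widetilde X_h>\gamma\log\log T$ forces $X_h(\alpha,1)>(\gamma-(1+u)a_{j+1})\log\log T$. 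By independence of $(X_h(\alpha))$ and $(X_h(\alpha,1))$ together with \eqref{eqn: increment one point}, the expected measure of such a slab is $\ll(\log T)^{-a_j^2/\alpha-(\gamma-(1+u)a_{j+1})^2/(1-\alpha)+\oo(1)}$; summing the finitely many slabs, optimizing over $a_j\le(1+\e)\alpha$, and then letting $\e\downarrow0$ bounds the expected restricted measure by $(\log T)^{\mathcal E_{\alpha,u}(\gamma)+\oo(1)}$, after which Markov finishes.

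\textbf{Lower bound.} This is the heart of the proof, and I would run Kistler's multiscale second-moment method as in \cite{arguin-ouimet}. Fix a fine partition $0=\lambda_0<\lambda_1<\cdots<\lambda_m=1$ through $\alpha$, a small cutoff $\delta=\lambda_{i_0}$, and let $(a^\star,b^\star)$ be the optimizer of the variational problem at a level $\gamma+\e'$ slightly above $\gamma$ (still $<\gamma^\star$). Define the ballistic event $\mathcal G_h$ requiring each increment $X_h(\lambda_{i-1},\lambda_i)$, $i\ge i_0$, to lie in a narrow window about its ideal linear value $\frac{a^\star}{\alpha}(\lambda_i-\lambda_{i-1})\log\log T$ if $\lambda_i\le\alpha$ and $\frac{b^\star}{1-\alpha}(\lambda_i-\lambda_{i-1})\log\log T$ if $\lambda_{i-1}\ge\alpha$, together with a harmless condition $|X_h(0,\delta)|\le(\log\log T)^{2/3}$. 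By construction $\mathcal G_h$ implies $\widetilde X_h>\gamma\log\log T$, so ${\rm Leb}\{h:\mathcal G_h\}$ lower-bounds the quantity of interest. The first moment is $\E[{\rm Leb}\{h:\mathcal G_h\}]=\prod_{i\ge i_0}\P(X_h(\lambda_{i-1},\lambda_i)\in W_i)=(\log T)^{\mathcal E_{\alpha,u}(\gamma)+\oo(1)}$, using independence of the blocks and the Berry--Esseen lower bound of Proposition \ref{prop: berry-esseen}, since the block exponents telescope to $-(a^\star)^2/\alpha-(b^\star)^2/(1-\alpha)$ as the partition refines, the windows shrink, $\delta\downarrow0$ and $\e'\downarrow0$. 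For the second moment, split the double integral over $|h-h'|$ according to the scale $\lambda$ with $|h-h'|\sim(\log T)^{-\lambda}$: by \eqref{eqn: increment two points} the blocks below scale $\lambda$ are nearly shared, those above are nearly independent, and the single block straddling $\lambda$ contributes a harmless factor $(\log T)^{\oo(1)}$ as $m\to\infty$, whence $\E[{\rm Leb}\{h:\mathcal G_h\}^2]\ll\sum_j(\log T)^{-\lambda_j-e_<(\lambda_j)+2\mathcal E_{\alpha,u}(\gamma)+\oo(1)}$, where $e_<(\lambda)$ is the (negative) cost of the windows below scale $\lambda$.

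The key inequality is $e_<(\lambda)\ge-\lambda$ for every $\lambda\in[\delta,1]$: since both sides are piecewise-linear in $\lambda$ it suffices to check $\lambda=\alpha$ and $\lambda=1$, and these reduce respectively to $a^\star\le\alpha$ and $\mathcal E_{\alpha,u}(\gamma)\ge-1$, both of which hold precisely because $\gamma<\gamma^\star$ — this, together with the appearance of $\gamma_c$ as the point where $a^\star$ first hits $\alpha$, is where the hypotheses of the lemma are genuinely used, and is why the formula degenerates at $\gamma=\gamma^\star$, where $\mathcal E=-1$ and there are no high points by Lemma \ref{lem: max}. Consequently $\E[{\rm Leb}\{h:\mathcal G_h\}^2]\ll(\log T)^{2\mathcal E_{\alpha,u}(\gamma)+\oo(1)}$ and Paley--Zygmund gives $\P\big({\rm Leb}\{h:\widetilde X_h>\gamma\log\log T\}\ge(\log T)^{\mathcal E_{\alpha,u}(\gamma)-\e}\big)\ge(\log T)^{-\oo(1)}$. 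To upgrade this to convergence in probability I would rerun the second-moment bound on $N=N(T)\to\infty$ slowly (say $N=(\log T)^{\e/2}$) disjoint, well-separated sub-intervals of $[0,1]$: conditionally on the common low-frequency part $X_\cdot(0,\delta)$, which is $\oo(\log\log T)$ and hence irrelevant to the exponent, the restricted measures on these sub-intervals are asymptotically independent, so the probability that the lower bound fails on all of them is $\le(1-(\log T)^{-\oo(1)})^N\to0$. Combined with the upper bound and the continuity of $\gamma\mapsto\mathcal E_{\alpha,u}(\gamma)$, letting $\e\downarrow0$ then yields $\mathcal E_{\alpha,u}(\gamma;T)\to\mathcal E_{\alpha,u}(\gamma)$ in probability. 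The main obstacles I anticipate are the bookkeeping for the ballisticity inequality $e_<(\lambda)\ge-\lambda$ and the straddling-block estimate in the second moment, and the decorrelation-over-sub-intervals step needed to turn positive probability into high probability; the upper bound and the variational computation are routine.
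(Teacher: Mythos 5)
Your overall architecture is the paper's: the variational identification of the exponent (including the role of the constraint $a\le\alpha$ and of $\gamma_c$), the first-moment/Markov upper bound with the extra truncation of $X_h(\alpha)$ at level $(\alpha+\e')\log\log T$ when $u>0$ and $\gamma\ge\gamma_c$, and a Kistler coarse-grained second-moment/Paley--Zygmund lower bound whose region-by-region analysis in $|h-h'|$ culminates in the inequality $e_<(\lambda)\ge-\lambda$ --- this is exactly the paper's bound $\prod_{m=2}^r\P(\mathcal J_h(m))^{-1}\ll(\log T)^{(r-1)/K}$, and your reduction to the two endpoint checks $a^\star\le\alpha$ and $\mathcal E_{\alpha,u}(\gamma)\ge-1$ is a clean way to package it. (Two smaller differences: the paper uses one-sided exceedance events per block rather than two-sided windows --- which matters slightly, since Proposition \ref{prop: berry-esseen} as stated only gives a one-sided lower bound $\gg$, so a window event would require a local two-sided estimate --- and it discards the first block via a separate negligible set $\mathcal B$ rather than a $(\log\log T)^{2/3}$ truncation.)

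The genuine gap is your last step. You only establish $\E[\mathcal M^2]\ll(\log T)^{\oo(1)}\,\E[\mathcal M]^2$, so Paley--Zygmund gives success probability $(\log T)^{-\oo(1)}$, and you propose to boost this to probability $1-\oo(1)$ by running the argument on $N(T)\to\infty$ separated sub-intervals and invoking their ``asymptotic independence'' after conditioning on the low-frequency part. That independence is not available from the tools at hand: all sub-intervals are functions of the same variables $(U_p)$, the field is not Gaussian, and neither the covariance estimates of Lemma \ref{lem: sum primes} nor the pairwise statement of Proposition \ref{prop: berry-esseen} yields approximate independence of the $N$ success events (pairwise decorrelation of the restricted measures does not control $\P\bigl(\bigcap_i A_i^c\bigr)$, and a Markov bound on the cross moments is far too weak because the thresholds are $\eta_T\E[{\rm Leb}]$ with $\eta_T\to0$). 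The paper sidesteps this entirely: in the dominant region $|h-h'|>(\log T)^{-1/(2K)}$, Proposition \ref{prop: berry-esseen} gives the asymptotic factorization $\P(\mathcal J_h(m)\cap\mathcal J_{h'}(m))=(1+\oo(1))\P(\mathcal J_h(m))^2$, and the remaining regions are shown to be $\oo(\E[\mathcal M]^2)$ (the region-III sum is the tail of a convergent geometric series because $\lambda$ is taken strictly below $\lambda^\star$, which makes your inequality $e_<(\lambda)\ge-\lambda$ strict), so that $\E[\mathcal M^2]=(1+\oo(1))\E[\mathcal M]^2$ and Paley--Zygmund alone yields probability tending to $1$. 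You should replace your bootstrap by this sharpened second-moment computation; as written, your argument only proves the lower bound with probability bounded away from $0$, which does not give convergence in probability.
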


\begin{rem}
\label{rem: dicho}
{\rm 
The dichotomy in the log-measure is due to the fact that for $h$ with values beyond $\gamma_c\log\log T$, the intermediate values $X_h(\alpha)$ is restricted by the maximal level $\alpha\log\log T$. 
More precisely, consider
\begin{equation}
\label{eqn: M}
\begin{aligned}
\mathcal{M}_T = & {\rm
Leb}\{h\in [0,1]: \widetilde X_h> \gamma \log
\log T\}\\
\mathcal{M}'_T   =&\mathrm{Leb}\{h\in[0,1]:(1+u) X_h(\alpha) \ge
\lambda \log \log T\}\\\mathcal{M} ''_T   =&\mathrm{Leb}\{h\in[0,1]:(1+u)
X_h(\alpha) \ge
\lambda \log \log T,X_h(\alpha,1)\ge
(\gamma-\lambda)\log \log T\}\ .
\end{aligned}
\end{equation}
Clearly, we must have \(\mathcal{M}_T''\leq\mathcal{M}_T\). 
It turns out that \(\mathcal{M}_T''\) and \(\mathcal{M}_T\) are comparable for an optimal choice of $\lambda$ given by, when \(u<0 \),
\begin{equation}
\label{eqn: lambda1}
\lambda^\star =\frac{\gamma (1+u)^2\alpha}{(1+u)^2\alpha+  1-\alpha},\qquad
\gamma<\gamma^\star,
\end{equation}
and when  \( u>0\),
\begin{equation}
\label{eqn: lambda2}
\lambda^\star =\begin{cases}\frac{\gamma (1+u)^2\alpha}{(1+u)^2\alpha+
 1-\alpha}
& \text{if }0<\gamma<\gamma_c\ ,  \\
(1+u)\alpha\ & \text{if }\gamma_c\le \gamma<\gamma^\star.
\end{cases}
\end{equation}
One can see this at a heuristic level by considering first moments. Since the maximum of \(X_{h}\) is well approximated by the maximum over
lattice points spaced \((\log T)^{-1} \) apart, 
there should be \(\gamma\)-high points only if 
\begin{equation}
(\log T)\cdot \mathcal{M}_T''\ge 1.\label{eq:GammaHighPtsExist}
\end{equation}
Moreover, we have that if
\(\mathcal{M}_T'=0\), then \(\mathcal{M}_T''=0\). And the
maximum of \(X_{h}(\alpha)\) is well approximated by the maximum over
lattice points spaced \((\log T)^{-\alpha} \) apart, 
so there should be \(\gamma\)-high points only if\begin{equation}
(\log T)^{\alpha}\cdot \mathcal{M}_T'\ge 1.\label{eq:GammaHighPtsExistAlpha}
\end{equation}
Since $X_h(\alpha)$ and $X_h(\alpha,1)$ are approximately Gaussian with variance $\frac{1}{2}\alpha\log\log T$ and $\frac{1}{2}(1-\alpha)\log\log T$, the following should hold approximately
 \begin{align*}
 &\frac{\log  \E[ (\log T)^\alpha\cdot \mathcal{M}_T']}{\log\log T}=\alpha-\frac{\lambda^2}{(1+u)^2\alpha}+\oo(1)
 \\
 &\frac{\log \E[(\log T)\cdot \mathcal{M}_T'']}{\log\log T}=1-\frac{\lambda^2}{(1+u)^2\alpha}-\frac{(\gamma-\lambda)^2}{1-\alpha}+\oo(1)
\end{align*}
Together with conditions (\ref{eq:GammaHighPtsExist}) and (\ref{eq:GammaHighPtsExistAlpha}), we obtain constraints on the value of \(\lambda\): 
\begin{align}
 & \alpha-\frac{\lambda^2}{(1+u)^2\alpha}\ge 0 \ ,
 \label{eq:MaximizationConstraintAlpha}
 \\
 &1-\frac{\lambda^2}{(1+u)^2\alpha}-\frac{(\gamma-\lambda)^2}{1-\alpha}\ge0 \ .
\label{eq:MaximizationConstraint}
\end{align}
By maximizing \(\mathcal{M}_T''\), under the constraints (\ref{eq:MaximizationConstraintAlpha})
and (\ref{eq:MaximizationConstraint}), one gets the values \eqref{eqn: lambda1} and \eqref{eqn: lambda2} for $\lambda$.
}
\end{rem}

\medskip

With Remark \ref{rem: dicho} in mind, we are ready to bound the log-measure.

\begin{proof}[Proof of Lemma \ref{lem: high points}]
\noindent \textit{Upper bound on the log-measure.} 
For $0<\gamma<\gamma^\star$, consider $\mathcal{M}_T$ as in \eqref{eqn: M}.
We need to show that for $\e>0$
\begin{equation}
\label{eqn: to prove UB}
\lim_{T\to\infty} \P\left(\mathcal{M}_T > (\log T)^{\mathcal E_{\alpha,u}(\gamma)+\e}\right)=0\ .
\end{equation}
We first prove the easiest cases where \(u\ge 0\) and \(\gamma< \gamma_c\), as well as \(u\leq  0\). 
Let $\e>0$. And write $V=1-\alpha +(1+u)^2 \alpha$ for short. Observe that by Markov's inequality and Fubini's theorem
\begin{equation}
\begin{aligned}
\P(\mathcal M_T>(\log T)^{-\frac{\gamma^2}{V}+\e})
&\leq (\log T)^{\frac{\gamma^2}{V}-\e} \int_0^1 \P(\widetilde X_h>\gamma\log\log T)\ \rd h\\
& =(\log T)^{\frac{\gamma^2}{V}-\e} \P(\widetilde X_h>\gamma\log\log T)\ ,
\end{aligned}
\end{equation}
where we used the fact that the variables $\widetilde X_h$, $h\in[0,1]$, are identically distributed. 
Since  $\P(\widetilde X_h>\gamma\log\log T)\ll \exp(-\gamma^2\log\log T/V)$ by Equation \eqref{eqn: X tilde one-point}, the claim \eqref{eqn: to prove UB} follows.

The case \(u> 0\), \(\gamma> \gamma_c\) is more delicate as we need to control the values at scale $\alpha$.
For $\e'>0$ to be fixed later, note that the same argument as for equation \eqref{eqn: max interval} gives
\begin{equation}
\label{eqn: measure scale alpha}
\begin{aligned}
&\P\left({\rm Leb}\{h\in [0,1]: X_h(\alpha)>(\alpha+\e')\log\log T\}>0 \right)\\
&\hspace{2cm} \leq \P\left(\exists h\in [0,1]: X_h(\alpha)>(\alpha+\e')\log\log T \right)\to 0\ .
\end{aligned}
\end{equation}
The same hold by symmetry for $-X_h(\alpha)$. This implies
$$
\begin{aligned}
&\P\left(\mathcal{M}_T > (\log T)^{\mathcal E_{\alpha,u}(\gamma)+\e}\right)\\
&=\P\left({\rm Leb}\{h: \widetilde X_h>\gamma\log\log T,|X_h(\alpha)|\leq (\alpha+\e')\log\log T\}>(\log T)^{\mathcal E_{\alpha,u}(\gamma)+\e}\right)
+\oo(1)\ .
\end{aligned}
$$
It remains to prove that the first term is $\oo(1)$. 
As in the proof of Lemma \ref{lem: max}, we consider the partition of $[0,1]$ by intervals $J_j$, $j\leq (\log T)^{\alpha}$, and the sub-partition $I_{jk}$, $k\leq (\log T)^{1-\alpha}$.
We also divide the interval  $[-(\alpha+\e')\log\log T, (\alpha+\e')\log\log T]$ into intervals $[q,q+1]$. Then by Markov's inequality and the additivity of the Lebesgue measure
\begin{equation}
\label{eqn: leb decomp}
\begin{aligned}
&\P\left({\rm Leb}\{h: \widetilde X_h>\gamma\log\log T,|X_h(\alpha)|\leq (\alpha+\e')\log\log T\}>(\log T)^{\mathcal E_{\alpha,u}(\gamma)+\e}\right)\\ 
&\leq(\log T)^{-\mathcal E_{\alpha,u}(\gamma)-\e} \sum_{j,k}\sum_{|q|\leq (\alpha+\e')\log\log T} \E\left[{\rm Leb}\{h\in I_{jk}:  \widetilde X_h>\gamma\log\log T,X_h(\alpha)\in[q,q+1] \}\right]\\
& \leq (\log T)^{-\mathcal E_{\alpha,u}(\gamma)-\e}\sum_{j,k}\sum_{|q|\leq (\alpha+\e')\log\log T} (\log T)^{-1}  \P\left(X_h(\alpha,1)>\gamma\log\log T-(1+u)(q+1),X_h(\alpha)\geq q \}\right)\ .
\end{aligned}
\end{equation}
The last line follows from Fubini's theorem and the fact that ${\rm Leb}(I_{jk})=(\log T)^{-1}$. The probabilities can be bounded by the Gaussian bound \eqref{eqn: increment one point}
$$
\begin{aligned}
&\P\left(X_h(\alpha,1)>\gamma\log\log T-(1+u)(q+1),X_h(\alpha)\geq q \}\right)\\
&\ll \exp\left(\frac{-q^2}{\alpha \log\log T}\right)\cdot \exp\left(\frac{-\Big(\gamma\log\log T-(1+u)(q+1)\Big)^2}{(1-\alpha) \log\log T}\right)\ .
\end{aligned}
$$
It is easily checked that the expression is maximized at $q>(\alpha+\e')\log\log T$ for $\e'$.  
Moreover, at the optimal $q=(\alpha+\e')\log\log T$ in the considered range, the probability equals $(1+\oo(1))(\log T)^{\mathcal E_{\alpha,u}(\gamma)}$.
Using this observation to bound the probability for each $q$ in \eqref{eqn: leb decomp}, we get
$$
\begin{aligned}
&\P\left({\rm Leb}\{h: \widetilde X_h>\gamma\log\log T,|X_h(\alpha)|\leq (\alpha+\e')\log\log T\}>(\log T)^{\mathcal E_{\alpha,u}(\gamma)+\e}\right)\\ 
&\ll(\log T)^{-\e} \log\log T =\oo(1)\ .
\end{aligned}
$$
This finishes the proof of the upper bound.\\

\noindent \textit{Lower bound on the log-measure.} 
For $\e>0$, the goal is to show 
\begin{equation}\label{eq: lower bound goal}
\P\left(\mathcal{M}_T> (\log T)^{\mathcal E_{\alpha,u}(\gamma)-\e} \right)\to 1 \ \text{as $T\to\infty$.}
\end{equation}
This is done using the Paley-Zygmund inequality, which states that for a random variable \(\mathcal{M}\ge 0\) and \(0\le \eta_T\le
1\),
\begin{equation}
\P\left( \mathcal{M}>\eta _TE\left[ \mathcal{M}  \right]  \right)
\ge \left(1-\eta _T  \right)^2\frac{\E\left[ \mathcal{M}  \right]^2}{\E\left[ \mathcal{M}^2
\right]}.\label{eq:PaleyZygmund}
\end{equation}
We will have \(\eta_T\to 0\), so the main task will be to demonstrate
\begin{equation}
\E\left[ \mathcal{M}^2  \right]=\left( 1+\oo\left( 1 \right) \right)\E\left[
\mathcal{M}  \right]^2\ .
\label{eq:ShowPaleyZygmund}
\end{equation}
This cannot be achieved when \(\mathcal{M}=\mathcal{M}_T\)
because of the correlations in  \(X_h\). To overcome this problem, we define a modified version of \(\mathcal{M}_T\)
by \textit{coarse graining}  the field as described in \cite{kistler}.

For $K\in \N$ (that will depend eventually on $\e$), assume without loss of generality that
\(\{0,\frac{1}{K},\frac{2}{K},\dots,
\frac{K-1}{K},1\}\) is a partition of \([0,1]\) that is a refinement of \(\{0,\alpha,1\}\).
Consider $\lambda<\lambda^\star$ as defined in \eqref{eqn: lambda1} and \eqref{eqn: lambda2}, and $\delta>0$ (that will depend on $\e$).
 Define the  events for the \(K\)-level coarse increments:\begin{equation}\label{eq: corase increments}
\mathcal{J}_h(m)=\begin{cases}\left\{ (1+u)X_h\left(\frac{m-1 }{K}
,\frac{m }{K}
\right)\ge(1+\delta) 
\frac{\lambda\log \log T}{\alpha K}\right\} & \text{if }m=1,\dots ,\alpha K, \\
\left\{  X_h\left(\frac{m-1 }{K} ,\frac{m }{K}
\right)\ge(1+\delta) 
\frac{(\gamma-\lambda)\log \log T}{(1-\alpha )K}\right\}&\text{if }m=\alpha K+1,\dots,
K. \end{cases}
\end{equation} 
Moreover, define the sets
\begin{equation}\label{eq: modified exceedance}
\mathcal A=\{h: \mathcal{J}_h(m) \text{ occurs } \forall m=2,\dots,K\} \qquad
\mathcal B=\{h: (1+u)X_h\left(\frac{1}{K}\right)\leq-\frac{\delta}{2}\log\log T\} \ .
\end{equation}
 Note that if \(h\in \mathcal A \), by adding up the inequalities in  \(\mathcal{J}_h(m)\), we have for $K$ large enough,
 \begin{equation}
\label{eq: delta 2 lower bound}
\widetilde X_h-(1+u)X
_h\left(
\frac{1 }{K} \right)\ge(1+\delta)\left(\gamma  -  
\frac{\lambda}{\alpha K}\right)\log \log T\ge\left( \gamma+\frac{\delta}{2} \right)\log \log T\ .
\end{equation} 
Therefore this implies the inclusion
$$
\mathcal A \subset \left\{h\in [0,1]: \widetilde X_h \ge \gamma\log\log T  \right\}\cup \mathcal B\ ,
$$
so that $\mathcal{M}_T\ge \mathrm{Leb}(\mathcal A) -\mathrm{Leb}(\mathcal B)$.
Equation \eqref{eqn: increment one point} and Fubini's theorem shows that $\E[\mathrm{Leb}(\mathcal B)]\ll(\log T)^{-\frac{\delta^2K}{4(1+u)^2}}$.
For \(K\) large enough, Markov's inequality then implies
$$
\P\left(\mathrm{Leb}\left\{h\in [0,1]:(1+u)X_h\left(
\frac{1 }{K} \right)\le  -\frac{\delta}{2}\log
\log T\right\}\le (\log T)^{\mathcal{E}_{\alpha,u}(\gamma)-\varepsilon}  \right)\to 1.
$$
The proof of \eqref{eq: lower bound goal} is then reduced to show
\begin{equation}\label{eq: only need to show}
\P(\mathrm{Leb}(\mathcal A)>2(\log T)^{ \mathcal{E}_{\alpha,u}(\gamma)-\varepsilon})
=
\P(\mathrm{Leb}(\mathcal A)>\eta_T\E[\mathrm{Leb}(\mathcal A)])
\to 1\ ,
\end{equation}
where \(\eta_T \) is defined by $2(\log T)^{ \mathcal{E}_{\alpha,u}(\gamma)-\varepsilon}=\eta_T\E[\mathrm{Leb}(\mathcal A)]$.

Following \eqref{eq:PaleyZygmund}, we first show $\eta_T\to0$.
By (\ref{eq: modified exceedance}), Fubini's theorem,  and independence,
\begin{equation}
\label{eq: product of prob increments}
\E[\mathrm{Leb}(\mathcal A)]=\int_0^1\prod
_{m=2}^K\P\left( \mathcal{J}_h(m) \right) \rd h
=\prod_{m=2}^K\P\left( \mathcal{J}_h(m) \right),
\end{equation}
since the $X_h$'s are identically distributed.
By Proposition \ref{prop: berry-esseen}, 
\begin{equation}
\label{eq: probability of increment}\P (\mathcal{J}_h(m))
\gg
\begin{cases}(\log
T)^{- 
\frac{ (1+\delta)^2\lambda^2 }{\alpha^2 K(1+u)^2}    
  +\oo(1)} & \text{when }m=1,\dots ,\alpha K, \\
(\log T)^{-\frac{(1+\delta)^2  
(\gamma-\lambda)^2}{(1-\alpha )^2K} 
  +\oo(1)} & \text{when }m=\alpha K+1,\dots,
K. \\
\end{cases} 
\end{equation}Thus, by (\ref{eq: product of prob increments}) and (\ref{eq: probability of increment}), we have
\begin{equation}
\label{eq: expectation of modified}
\E [ \mathrm{Leb}(\mathcal A)]  
\gg(\log
T)^{ - \frac{ (1+\delta)^2\lambda^2 }{\alpha (1+u)^2}-\frac{(1+\delta)^2  
(\gamma-\lambda)^2}{(1-\alpha )}}(\log T)^{\frac{(1+\delta)^2\lambda^2 }{\alpha^2 K(1+u)^2}+\oo(1)}\ .
\end{equation} 
We can take $\lambda$ close enough to $\lambda^\star$, $\delta$ small enough, and $K$ large enough so that
$$
\E [ \mathrm{Leb}(\mathcal A)]  
\gg(\log
T)^{ - \frac{ {\lambda^\star}^2 }{\alpha (1+u)^2}-
\frac{  (\gamma-\lambda^\star)^2}{(1-\alpha )}-\frac{\e}{2}}=(\log T)^{  \mathcal E_{\alpha,u}(\gamma)-\frac{\e}{2}}\ ,
$$
where we replace the value of $\lambda^\star$ of \eqref{eqn: lambda1} and \eqref{eqn: lambda2}. This shows that \(\eta_T\to 0\).  
Observe that, we also have the reverse inequality 
\begin{equation}
\label{eqn: reverse}
\E [ \mathrm{Leb}(\mathcal A)] \ll(\log T)^{  \mathcal E_{\alpha,u}(\gamma)+\frac{\e}{2}}\ ,
\end{equation}
using \eqref{eqn: increment one point} instead of Proposition \ref{prop: berry-esseen}.

It remains to show (\ref{eq:ShowPaleyZygmund}). 
By independence of increments and Fubini's theorem, we have
\begin{equation}\label{eq: second moment}
\E[{\mathrm{Leb}(\mathcal A)}^2]= 
\int _0^1\int _0^1\prod _{m=2}^{  K} \P(\mathcal{J}_h(m)\cap \mathcal{\mathcal{J}}_{h'}(m))\ 
\rd h\rd h'\ .
\end{equation}
We split the integral into four integrals: I for $|h-h'|>(\log T)^{\frac{-1}{2K}}$, II for $(\log T)^{\frac{-1}{K}}\leq |h-h'|\leq(\log T)^{\frac{-1}{2K}}$,
III for  $(\log T)^{\frac{-r}{K}}<|h-h'|\leq(\log T)^{\frac{-(r-1)}{K}}$, $r=2,\dots K$, and IV for $|h-h'|\leq (\log T)^{-1}$.
We will show  that \(\mathrm I=\E\left[{\mathrm{Leb}(\mathcal A)}  \right]^2\left( 1+\oo(1) \right)\) and the others \(\oo(\E[{\mathrm{Leb}(\mathcal A)} ]^2\)).

\begin{itemize}
\item For $\mathrm{II}$, note that $\mathrm{Leb}^{\times 2}\{(h,h'):(\log T)^{\frac{-1}{K}}\leq |h-h'|\leq(\log T)^{\frac{-1}{2K}} \}\ll(\log T)^{\frac{-1}{2K}}$.
Moreover, by \eqref{eqn: increment two points} and Proposition \ref{prop: berry-esseen},  we have $\P (\mathcal{J}_h(m)\cap \mathcal{\mathcal{J}}_{h'}(m))\ll  \P (\mathcal{J}_h(m))^2$.
This implies $\mathrm{II}= \oo(\E[{\mathrm{Leb}(\mathcal A)} ]^2)$. 
\item For $\mathrm{IV}$, note that clearly $\P (\mathcal{J}_h(m)\cap \mathcal{\mathcal{J}}_{h'}(m))\leq \P (\mathcal{J}_h(m))$. Thus, $\mathrm{IV}\ll (\log T)^{-1}\E[\mathrm{Leb}(\mathcal A)]$. Using \eqref{eqn: reverse} and the fact that $1+ \mathcal{E}_{\alpha,u}(\gamma)>0$
for $\gamma<\gamma^\star$, one gets $\mathrm{IV}= \oo(\E[{\mathrm{Leb}(\mathcal A)} ]^2)$.

\item For $\mathrm{I}$, note that $\mathrm{Leb}^{\times 2}\{(h,h'): |h-h'|>(\log T)^{\frac{-1}{2K}} \}=1+\oo(1)$. 
Moreover, by Proposition \ref{prop: berry-esseen},  $\P (\mathcal{J}_h(m)\cap \mathcal{\mathcal{J}}_{h'}(m))=(1+\oo(1))  \P (\mathcal{J}_h(m))^2$.
This implies $\mathrm{I}= (1+\oo(1))\E[{\mathrm{Leb}(\mathcal A)} ]^2)$.

\item For $\mathrm{III}$, the integral is a sum over $r=2,\dots,K$ of integrals of pairs with $(\log T)^{\frac{-r}{K}}<|h-h'|\leq(\log T)^{\frac{-(r-1)}{K}}$.
The measure of this set is $\ll (\log T)^{\frac{-(r-1)}{K}}$.
For fix $r$, the integrand is
$$
\begin{aligned}
\prod_{m=2}^K \P(\mathcal{J}_h(m)\cap \mathcal{\mathcal{J}}_{h'}(m))
&\leq \prod_{m=2}^r \P(\mathcal{J}_h(m))\prod_{m=r+1}^K \P(\mathcal{J}_h(m)\cap \mathcal{\mathcal{J}}_{h'}(m))\\
&\ll  \prod_{m=2}^r \P(\mathcal{J}_h(m))\prod_{m=r+1}^K \P(\mathcal{J}_h(m))^2\ ,
\end{aligned}
$$
where the last line follows by \eqref{eqn: increment two points} and Proposition \ref{prop: berry-esseen}.
Putting all this together and factoring the square of the one-point probabilities, one gets
$$
\mathrm{III}
\ll\E[{\mathrm{Leb}(\mathcal A)} ]^2 \sum_{r=2}^K(\log T)^{\frac{-(r-1)}{K}} \prod_{m=2}^r \Big(\P(\mathcal{J}_h(m))\Big)^{-1}\ .
$$
We show $\prod_{m=2}^r \Big(\P(\mathcal{J}_h(m))\Big)^{-1}<(\log T)^{\frac{(r-1)}{K}}$ uniformly in $T$. 
This finishes the proof since the sum is then the tail of a convergent geometric series. In the case $u<0$, since $\lambda<\lambda^\star$, and $(1+\delta)\gamma<\gamma^\star$ for $\delta$ small, we have by \eqref{eq: probability of increment},
$$
\P(\mathcal{J}_h(m))^{-1}
\ll
\begin{cases}
(\log T)^{\frac{{\lambda^\star}^2}{\alpha^2K(1+u)^2}} \ &\text{ if $m\leq \alpha K$}\\
(\log T)^{\frac{(\gamma^\star-\lambda^\star)^2}{(1-\alpha)^2K}} \ &\text{ if $m=\alpha K+1,\dots,K$.}\\
\end{cases}
$$
By the definition of $\lambda^\star$ and $\gamma^\star=V^{1/2}$, this implies
$$
 \prod_{m=2}^r \Big(\P(\mathcal{J}_h(m))\Big)^{-1}
 \ll
\begin{cases}
(\log T)^{\frac{(1+u)^2}{V}\frac{r-1}{K}} \ &\text{ if $r\leq \alpha K$}\\
(\log T)^{\frac{\alpha(1+u)^2}{V}+\frac{1}{V} \frac{r-\alpha K}{K}} \ &\text{ if $r=\alpha K+1,\dots,K$.}\\
\end{cases}
 $$
 Since $u<0$, it is straightforward to check that the exponent is smaller than $\frac{r-1}{K}$ as claimed. 
 The case $u\geq 0$ is done similarly by splitting into two cases $\gamma_c\leq\gamma<\gamma^\star$ and $0<\gamma<\gamma_c$. 
 We omit the proof for conciseness.
 \end{itemize}
\end{proof}

We now have all the results to finish the proof of Proposition \ref{prop: free energy} using Laplace's method.
\begin{proof}[Proof of Proposition  \ref{prop: free energy}]
We first prove the limit in probability. The convergence in $L^1$, and in particular the convergence of the expectation, will be a consequence of Lemma \ref{lem: UI} below.
 For fixed $\e>0$ and $M\in \N$, consider 
$$
\gamma_j=\frac{j(1+\e)}{M}\gamma^\star \qquad 0\leq j\leq M\ ,
$$
and the event
\begin{multline}
A=\bigcap_{j=1}^M
\left\{(\log T)^{\mathcal E_{\alpha,u}(\gamma_j)-\e}\leq {\rm Leb}\{h: \widetilde X_h> \gamma_j \log \log T\} \leq (\log T)^{\mathcal E_{\alpha,u}(\gamma_j)+\e} \right\}\\
\bigcap \left\{ {\rm Leb}\{h: \widetilde X_h> \gamma_M\log \log T\}=0\right\}\ .
\end{multline}
By Lemma \ref{lem: max} and Lemma \ref{lem: high points}, we have that $\P(A^c)\to 0$ as $T\to\infty$. It remains to prove that the free energy is close to the claimed expression on the event $A$. 
On one hand, the following upper bound holds on $A$:
$$
\begin{aligned}
\int_0^1 \exp \beta \widetilde X_h \ \rd h&\leq \sum_{j=1}^M \int_0^1 \exp \beta \widetilde X_h\ \1_{\{(\log T)^{\gamma_{j-1}}<e^{\widetilde X_h}\leq(\log T)^{\gamma_{j}}\}} \ \rd h
+\int_0^1 \exp \beta \widetilde X_h\ \1_{\{e^{\widetilde X_h}<1\}} \ \rd h\\
& \leq \sum_{j=1}^M (\log T)^{\beta \gamma_{j}+\mathcal E_{\alpha,u}(\gamma_{j-1})+\e}+1\ .
\end{aligned}
$$
On the other hand, we have the lower bound
$$
\begin{aligned}
\int_0^1 \exp \beta \widetilde X_h \ \rd h&\geq \sum_{j=1}^M \int_0^1 \exp \beta \widetilde X_h\ \1_{\{(\log T)^{\gamma_{j-1}}<e^{\widetilde X_h}\leq(\log T)^{\gamma_{j}}\}} \ \rd h
& \geq \sum_{j=1}^M (\log T)^{\beta \gamma_{j-1}+\mathcal E_{\alpha,u}(\gamma_{j})-\e}\ .
\end{aligned}
$$
Altogether, this implies
$$
\max_{1\leq j\leq M}\left\{\beta \gamma_{j-1}+\mathcal E_{\alpha,u}(\gamma_{j})-\e\right\}  \leq \frac{\log \int_0^1 \exp \beta \widetilde X_h \ \rd h}{\log\log T}  \leq \max_{1\leq j\leq M}\left\{\beta \gamma_{j}+\mathcal E_{\alpha,u}(\gamma_{j-1})+\e\right\}+\oo(1)\ .
$$
In particular, by continuity of $\mathcal E_{\alpha,u}(\gamma)$, we can pick $M$ large enough depending on $\e$ and $T$ large enough so that
$$
\left|\frac{\log \int_0^1 \exp \beta \widetilde X_h \ \rd h}{\log\log T} -\max_{\gamma\in [0,\gamma^{\star}]}\left\{\beta \gamma+\mathcal E_{\alpha,u}(\gamma)\right\} \right|
\leq 2\e\ .
$$
As mentioned above, since $\P(A^c)\to 0$ as $T\to \infty$, this proves the convergence in probability
$$
\lim_{T\to\infty}\frac{\log \int_0^1 \exp \beta \widetilde X_h \ \rd h}{\log\log T}= \max_{\gamma\in [0,\gamma^{\star}]}\left\{\beta \gamma+\mathcal E_{\alpha,u}(\gamma)\right\}\ .
$$
It remains to check that the right side has the desired form. Let $V=(1+u)^2 \alpha+(1-\alpha)$. 
If $u<0$, the optimal $\gamma$ is $\beta V/2$ whenever $\beta V/2<\gamma^\star$, i.e., $\beta<2/V^{1/2}$. If $\beta\geq2/V^{1/2}$, then the optimal $\gamma$ is simply $\gamma^\star$. Therefore, we have
$$
 \max_{\gamma\in [0,\gamma^{\star}]}\left\{\beta \gamma+\mathcal E_{\alpha,u}(\gamma)\right\}
 =\begin{cases}
 \frac{\beta^2V}{4} \ \ &\text{if $\beta<2/V^{1/2}$}\\
 \beta V^{1/2}-1\ \ &\text{if $\beta\geq2/V^{1/2}$}\ .
 \end{cases}
 $$
 If $u\geq 0$, the optimal $\gamma$ is $\beta V/2$ if $\gamma <\gamma_c$, i.e., $\beta <2/(1+u)$. If $\gamma>\gamma_c$, then the optimal $\gamma$ is $(1+u)\alpha+\frac{\beta(1-\alpha)}{2}$ until it equals $\gamma^\star$. This happens at $\beta\geq 2$. Putting all this together, we obtain that
 $$
  \max_{\gamma\in [0,\gamma^{\star}]}\left\{\beta \gamma+\mathcal E_{\alpha,u}(\gamma)\right\}=
  \begin{cases}
   \frac{\beta^2\big((1+u)^2 \alpha+(1-\alpha)\big)}{4} \ \ &\text{if $\beta<\frac{2}{(1+u)}$}\\
    \beta (1+u)\alpha -\alpha + \frac{\beta^2(1-\alpha)}{4}\ \ &\text{if $\frac{2}{(1+u)}\leq \beta<2$}\\
 \beta \Big((1+u)\alpha+(1-\alpha)\Big)-1\ \ &\text{if $\beta\geq2$}\ .
  \end{cases}
  $$
  This corresponds to the expression in Proposition \ref{prop: free energy} expressed in terms of \eqref{eqn: f}.
\end{proof}

\begin{lem}
\label{lem: UI}
The sequence of random variables 
$$
\Big(\frac{1}{\log\log T}\log \int_{0}^1 \exp\big(\beta (X_h+uX_{h}(\alpha)\big) \rd h\Big)_{T>1}
$$
is uniformly integrable. In particular, the convergence in probability of the sequence is equivalent to the convergence in $L^1$.
\end{lem}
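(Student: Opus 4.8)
The plan is to obtain uniform integrability from the stronger property that the sequence is bounded in $L^2$; this is all that is needed, since the lemma is used only to upgrade convergence in probability as $T\to\infty$, and in that regime $\log\log T$ is bounded below. Write $W_T=\int_0^1 e^{\beta\widetilde X_h}\,dh$ with $\widetilde X_h=(1+u)X_h(\alpha)+X_h(\alpha,1)$ as in \eqref{eqn: X tilde}, so the random variable in question is $F_T=(\log\log T)^{-1}\log W_T$. I would control $F_T^+$ and $F_T^-$ by two different mechanisms and then use $F_T^2=(F_T^+)^2+(F_T^-)^2$.

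For the negative part I would use Jensen's inequality for $x\mapsto e^{\beta x}$ against the uniform probability measure $dh$ on $[0,1]$: $\log W_T\ge\beta\int_0^1\widetilde X_h\,dh=:\beta Y_T$, so that $F_T^-\le \beta|Y_T|/\log\log T$. The quantity $Y_T=\sum_{p\le T}\frac{c_p}{p^{1/2}}\Re\big(U_p\int_0^1 p^{-\ii h}\,dh\big)$, where $c_p=1+u$ for $\log p\le(\log T)^\alpha$ and $c_p=1$ otherwise (so $|c_p|\le 2$), has mean zero and a bounded second moment: using $\E[U_p]=\E[U_p^2]=0$, independence of the $U_p$, and $\big|\int_0^1 p^{-\ii h}\,dh\big|\le 2/\log p$,
\[
\E[Y_T^2]=\sum_{p\le T}\frac{c_p^2}{2p}\Big|\int_0^1 p^{-\ii h}\,dh\Big|^2\le 8\sum_{p}\frac{1}{p(\log p)^2}<\infty ,
\]
the series converging by the Prime Number Theorem \eqref{eqn: PNT}. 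Hence $\sup_T\E[(F_T^-)^2]\le\beta^2\sup_T\E[Y_T^2]<\infty$.

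For the positive part I would use the crude bound $\log W_T\le\beta\max_{h\in[0,1]}\widetilde X_h$, so $F_T^+\le\frac{\beta}{\log\log T}\big(\max_h\widetilde X_h\big)_+$, together with a uniform upper-tail estimate for the maximum. Running the union bound from the proof of Lemma \ref{lem: max} with the smoothing estimate of Lemma \ref{lem: smoothing} (for $u>0$ one keeps the two-scale decomposition used there; the $(\log\log T)$-prefactors and $\oo(1)$-corrections in the exponent are harmless and can be absorbed) gives constants $c>0$ and $C$ with $\P(\max_h\widetilde X_h>\gamma\log\log T)\le(\log T)^{C-c\gamma^2}$ for every $\gamma\ge0$ and all large $T$. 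For $T$ large and $\gamma\ge\gamma_0:=\sqrt{2C/c}$ the right side is at most $e^{-c\gamma^2/2}$, so the layer-cake formula yields
\[
\E\Big[\Big(\tfrac{\max_h\widetilde X_h}{\log\log T}\Big)_+^2\Big]=\int_0^\infty 2\gamma\,\P\big(\max_h\widetilde X_h>\gamma\log\log T\big)\,d\gamma\le\gamma_0^2+\tfrac{2}{c}e^{-c\gamma_0^2/2} ,
\]
a bound uniform in $T$. Hence $\sup_T\E[(F_T^+)^2]<\infty$.

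Combining the two estimates gives $\sup_T\E[F_T^2]<\infty$, which more than suffices for uniform integrability; the last sentence of the lemma is then the Vitali convergence theorem (a uniformly integrable sequence converges in $L^1$ exactly when it converges in probability). I expect the only delicate point to be the uniform tail of $\max_h\widetilde X_h$ in the case $u>0$, where the single-scale union bound is not enough and one has to reproduce the two-scale estimate of Lemma \ref{lem: max}; the substantive new input is the elementary Jensen bound $\log W_T\ge\beta Y_T$ that tames the negative part.
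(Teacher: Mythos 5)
Your argument is correct, but it takes a genuinely different route from the paper. The paper does not go through second moments of $f_T=(\log\log T)^{-1}\log W_T$ at all: it bounds the two tails of $f_T$ directly by Markov's inequality applied to $W_T$ (for the right tail) and to $W_T^{-1}\le\int_0^1 e^{-\beta\widetilde X_h}\,\rd h$ via Jensen (for the left tail), and then uses the explicit exponential moment $\E[\int_0^1 e^{\pm\beta\widetilde X_h}\rd h]\ll(\log T)^{V\beta^2/4}$ from Proposition \ref{prop: MGF} to get $\P(|f_T|>y)\ll(\log T)^{V\beta^2/4-y}$; a tail linear in $y$ in the exponent is already enough for the truncated first moment. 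You instead prove the stronger statement $\sup_T\E[f_T^2]<\infty$. Your lower-tail step (Jensen against $\rd h$ plus the variance of $Y_T=\int_0^1\widetilde X_h\,\rd h$, using $\big|\int_0^1p^{-\ii h}\rd h\big|\le 2/\log p$) is if anything more elementary than the paper's, since it needs only second moments of the $U_p$'s rather than the moment generating function. Your upper-tail step, however, is heavier than necessary: you route through $\log W_T\le\beta\max_h\widetilde X_h$ and the chaining estimate of Lemma \ref{lem: smoothing}, where a single application of Markov to $\E[W_T]$ suffices. The one point you should make explicit there is that the layer-cake integral over $\gamma$ requires the implied constant in Lemma \ref{lem: smoothing} to be uniform in $\gamma$ on $[\gamma_0,\infty)$; the lemma as stated is a per-$\gamma$ bound with an unquantified $\ll$. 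This uniformity does hold (for a single point it follows from $I_0(x)\le e^{x^2/4}$ in the Chernoff bound, and the chaining argument preserves it), but it is an input beyond the literal statement you cite, whereas the paper's Markov-based tails sidestep the issue entirely.
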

\begin{proof}
Write for short
$$
f_T
=(\log\log T)^{-1}\log \int_{0}^1 \exp \beta \widetilde X_h\ \rd h\ .
$$
We need to show that for any $\varepsilon>0$, there exists $C$ large enough so that uniformly in $T$,
$$
\E[|f_T| \1_{\{|f_T)|>C\}}]<\varepsilon \ .
$$
It is easy to check that
\begin{equation}
\label{eqn: UI}
\E[|f_T| \1_{\{|f_T|>C\}}]=\int_C^\infty \P(f_T>y) \ \rd y + C  \P(f_T>C) + \int_{-\infty}^{-C} \P(f_T<y) \ \rd y + C  \P(f_T<-C)\ .
\end{equation}
Therefore, it remains to get a good control on the right and left tail of $f_T$. For the right tail, observe that by Markov's inequality
$$
\begin{aligned}
\P(f_T>y) =\P\Big(\int\exp\beta \widetilde X_h\ \rd h>(\log T)^y\Big)
\leq (\log T)^{-y}\ \E\left[\int\exp\beta \widetilde X_h\ \rd h\right]\ .
\end{aligned}
$$
Using Proposition \ref{prop: MGF} and Fubini's theorem, we get
$$
\P(f_T>y) \ll (\log T)^{((1+u)^2\alpha+(1-\alpha))\frac{\beta^2}{4}-y}\ .
$$
This implies
$$
\int_C^\infty \P(f_T>y) \ \rd y + C  \P(f_T>C) 
\ll\frac{(\log T)^{((1+u)^2\alpha+(1-\alpha))\frac{\beta^2}{4}-C}}{\log\log T}+C(\log T)^{((1+u)^2\alpha+(1-\alpha))\frac{\beta^2}{4}-C}\ .
$$
It suffices to take $C>((1+u)^2\alpha+(1-\alpha))\frac{\beta^2}{4}$ for this to be uniformly small in $T$. The left tail is bounded the same way after noticing that by Markov's and Jensen's inequalities,
$$
\begin{aligned}
\P(f_T<-y) =\P\Big(\int \exp\beta \widetilde X_h\ \rd h<(\log T)^{-y}\Big)
&\leq (\log T)^{-y}\ \E\left[\left(\int \exp\beta \widetilde X_h\ \rd h\right)^{-1}\right]\\
&\leq  (\log T)^{-y}\ \E\left[\int \exp-\beta \widetilde X_h\ \rd h\right]\\
&  \ll (\log T)^{((1+u)^2\alpha+(1-\alpha))\frac{\beta^2}{4}-y}\ .
\end{aligned}
$$
These estimates imply that $\E[|f_T| \1_{\{|f_T)|>C\}}]$ can be made arbitrarily small in \eqref{eqn: UI} by taking $C$ larger than $((1+u)^2\alpha+(1-\alpha))\frac{\beta^2}{4}$.
\end{proof}

\bibliographystyle{plain}

\bibliography{bib_rzf_rsb}
\end{document}